\newcommand{\s}{\sum}
\newcommand{\R}{{\mathbb R}}
\newcommand{\Rp}{{\mathbb R^+}}
\newcommand{\Md}{{\mathbb M^d}}
\newcommand{\D}{{\mathbb D}(\Rp,\R)}
\newcommand{\DDDD}{{\mathbb D}(\Rp,{\mathbb R}^{4})}
\newcommand{\Dd}{{\mathbb D}(\Rp,{\mathbb R}^{d})}
\newcommand{\Ddd}{{\mathbb D}(\Rp,{\mathbb R}^{2d})}
\newcommand{\Dddddtt}{{\mathbb D}(\Rp,{\mathbb R}^{4d+2})}
\newcommand{\DMd}{{\mathbb D}(\Rp,{\mathbb M}^{d})}
\newcommand{\Dddd}{{\mathbb D}(\Rp,{\mathbb R}^{3d})}
\newcommand{\Ddddd}{{\mathbb D}(\Rp,{\mathbb R}^{4d})}
\newcommand{\Dddddd}{{\mathbb D}(\Rp,{\mathbb R}^{5d})}
\newcommand{\lra}{\longrightarrow}
\newcommand{\BH}{B^H}
\newcommand{\ZH}{Z^{H}}
\newcommand{\BB}{Z^{H}}
\newcommand{\LH}[1]{\mathbb{L}^{1/H}_{#1}}
\newcommand{\N}{{\mathbb N}}
\newcommand{\No}{{\mathbb N}\cup\{0\}}
\newcommand{\arrowk}{\mathop{\longrightarrow}_{\cal K}}
\newcommand{\RR}{{\mathbb R}}
\newcommand{\Rd}{{{{\mathbb R}^d}}}
\newcommand{\RRp}{{{{\mathbb R}^+}}}
\newtheorem{theorem}{\bf Theorem}[subsection]
\newtheorem{lemma}[theorem]{\bf Lemma}
\newtheorem{definition}[theorem]{\bf Definition}
\newtheorem{corollary}[theorem]{\bf Corollary}
\newtheorem{remark}[theorem]{\bf Remark}
\newenvironment{proof}{{\sc Proof.}}{\hfill $\Box$}
\newcommand{\nsubsection}{\setcounter{equation}{0}\subsection}
\begin{document}
\title{\bf Sweeping processes with stochastic perturbations generated
by a fractional Brownian motion}
\author{Adrian Falkowski and Leszek S\l omi\'nski\footnote{Corresponding
author. E-mail address: leszeks@mat.umk.pl;
Tel.: +48-566112954; fax: +48-566112987.}\\
 \small Faculty of Mathematics and Computer Science,
Nicolaus Copernicus University,\\
\small ul. Chopina 12/18, 87-100 Toru\'n, Poland}
\date{}
\maketitle
\begin{abstract}
We study  well-posedness  of sweeping processes with  stochastic
perturbations generated by a fractional Brownian motion and  convergence of associated numerical schemes. To this end, we first prove  new existence, uniqueness and approximation results for
deterministic sweeping processes with bounded $p$-variation and next we   apply them to  the  stochastic case.
\end{abstract}
{\em Key Words}: sweeping process, differential inclusions,
integral equations, fractional Brownian motion,  $p$-variation, Skorokhod problem, stochastic differential
equations with reflecting
boundary condition.\\
{\em AMS 2000 Subject Classification}: Primary: 34A60, 60G22; Secondary:
60H10, 65L20.
\nsubsection{{ { Introduction}}} In the present paper we  study
well-posedness of some variants of the so-called sweeping process
introduced by Moreau in the early  70s with motivation  in
plasticity theory. In his original formulation the sweeping
process coincides with  a first order differential inclusion of
the form
\begin{equation}
\left\{\begin{array}{l}
\frac{dx}{dt}(t)\in N(C_t;x(t)), \\
x(0)=x_0\in C_0, \label{eq1.1}\\
x(t)\in C_t,
\end{array}
\right.
\end{equation}
where $C_t$  is a given convex moving set and  $N(C_t;x(t))$ is
the  inward normal cone to  $C_t$ at point $x(t)$ (see
\cite{mo1,mo2,mo3}). Many attempts have been made to generalize
Moreau's results to larger class of moving sets or more general
than (\ref{eq1.1}) differential inclusions containing
deterministic or stochastic perturbations. For instance, sweeping
by prox-regular moving sets instead of convex sets was considered
by Colombo and Goncharov \cite{cg}, Benabdellah \cite{be},
Thibault \cite{th}, Colombo and Monteiro Marques \cite{cmm}. The
study of sweeping processes with perturbations  was introduced by
Castaing, D\'uc, Ha and Valadier \cite{cdhv} and Castaing amd
Monteiro Marques \cite{cm}. The interest in the theory of sweeping
processes comes from the fact that it has numerous practical
applications in nonsmooth mechanics, analysis of hysteresis
phenomena, mathematical economics and in the modeling of switched
electrical circuits (see, e.g., the monographs by Acary, Bonnefon
and Brogliato \cite{abb}, Dr\'abek, Krej\u{c}i and Taka\u{c}
\cite{dkt},  Monteiro Marques \cite{mm} and the references
therein).

In our paper we study sweeping processes with stochastic
perturbations.  This problem was  considered earlier by Colombo
\cite{co1,co2} and  recently by  Bernicot and Venel \cite{bv}.  In
the last paper the authors give conditions ensuring well-posedness
of  $d$-dimensional stochastic differential inclusions of the form
\begin{equation}
\left\{\begin{array}{l}
dX_t\in f(t,X_t)dt+g(t,X_t)dB_t+N(C_t;X_t), \\
X_0=x_0\in C_0, \label{eq1.2}\\
X_t\in C_t,
\end{array}
\right.
\end{equation}
where $C_t$ is a given prox-regular moving set  and
$B=\{B_t\}_{t\in\Rp}$ is a standard  Brownian motion. To do this,
in proofs they combine the methods of deterministic sweeping
process theory with the  methods of stochastic differential
equations (SDEs) with reflecting boundary conditions. The use of
the methods of SDEs is  possible, because one can observe that
(\ref{eq1.2}) is equivalent to the SDE  with reflecting boundary
condition of the form
\begin{equation}\label{eq1.3}
 X_t = x_0 + \int_0^t f(s,X_{s})\,ds+\int_0^t
g(s,X_{s})\,dB_s+K_t, \quad t\in\mathbb R^+,
\end{equation}
where the integral with respect to $B$ is the classical stochastic
integral.

By a solution to (\ref{eq1.3}) we mean a pair $(X,K)$ consisting
of a process $X=\{X\}_{t\in\Rp}$ such that  $X_t\in C_t$ and the
process $K=\{K_t\}_{t\in\Rp}$,  called regulator term,  such that
$dK_t\in N(C_t;X_t)$ in appropriately defined sense.  Equation
(\ref{eq1.3}) was firstly investigated  by Skorokhod \cite{sk} for
$C_t=[0,\infty)$, $t\in\Rp$. Extensions of Skorokhod's results to
larger class of domains was studied for instance by  Tanaka
\cite{ta}, Lions and Sznitman \cite{ls}, Saisho \cite{sa}, Dupuis
and Ishi \cite{di}, S{\l}omi{\'n}ski \cite{s4}  and Rozkosz \cite{ro}. Equations of
the form (\ref{eq1.3}) also have many applications, for instance
in queueing systems, seismic reliability analysis and  finance
(see, e.g., \cite{as,dr,KS,ss} and the references therein).
Solutions of (\ref{eq1.3}) are often called solutions of
Skorokhod's SDEs  or of the Skorokhod problem.

In our paper a stochastic perturbation is generated not by a
standard Brownian motion but by a fractional Brownian motion (fBm)
 $\BH=\{\BH_t\}_{t\in\Rp}$  with Hurst index
$H>1/2$, i.e. by a continuous centered Gaussian process  with
covariance
\[
EB^H_{t_2}B^H_{t_1}=\frac12(t_2^{2H}+t_1^{2H}-|t_2-t_1|^{2H}),
\quad t_1,t_2\in\RRp.
\]
It is well known that $B^H$ is not a semimartingale and therefore
the classical stochastic integration theory for semimartingales
cannot be applied. However,  $B^H$ has $\lambda$-H\"older
continuous paths   for all $\lambda\in(0,H)$, which allows one to
define the pathwise  Riemann-Stieltjes integral with respectto fBm
(see, e.g., \cite{dn,dn1,ru}). The theory of  SDEs without
reflecting boundary condition driven by $B^H$  with the pathwise
Riemann-Stieltjes integral is at present quite  well-developed.
General results on existence  and uniqueness of solutions one can
find in Nualart and R\u{a}\c{s}canu \cite{nr}. The  viability
property for such equations is  considered in details in Ciotir
and R\u{a}\c{s}canu \cite{cr}.

Let $f:\Rd\rightarrow\Rd$, $g:\Rd\rightarrow\Rd\otimes\Rd$ be
measurable functions and $B^H$ be a $d$-dimensional fBm. Our main
purpose is to study $d$-dimensional SDE with reflecting boundary
condition of the form
\begin{equation}\label{eq1.4}
 X_t = X_0 + \int_0^t f(s,X_{s})\,ds+\int_0^t
g(s,X_{s})\,dB^H_s+K_t, \quad t\in\mathbb R^+,
\end{equation}
where the integral with respect to $B^H$ is the pathwise
Riemann-Stieltjes integral and
$C_t=[L_t,U_t]=\times_{i=1}^d[L^i_t,U^i_t]\subset\Rd$ is a moving
convex set (Here $L^i_t\leq U^i_t$, $t\in\Rp$, $i=1,\dots,d$). We
also study some generalizations of (\ref{eq1.4}).  Clearly,
(\ref{eq1.4}) is equivalent to  sweeping process of the form
(\ref{eq1.2}) with stochastic perturbation generated by fBm. The
integral form (\ref{eq1.4}) is however more convenient because in
general the process $K$ need not be of bounded variation and
therefore the use of the differential $dK_t$ would require
additional explanations.

In the recent paper by Ferrante and Rovira \cite{fr1}  the special
case of (\ref{eq1.4})  with $C_t=[0,\infty)^d$  was considered.
Using quite natural in the context of SDEs driven by $B^H$ methods
based on $\lambda$-H\"older norms  they gave conditions ensuring
the existence of solutions  and their uniqueness for some small
time interval. Some global uniqueness results  for (\ref{eq1.4})
with time homogenous  coefficients $f,g$ and
$C_t=\times_{i=1}^d[L^i_t,\infty)$ were proved in Falkowski and
S{\l}omi{\'n}ski \cite{fs}, where in contrast to \cite{fr1} the
$p$-variation norm  is used. In the present paper we also use
techniques using the $p$-variation norm. It is worth noting that
we do not assume the so-called ``interior ball condition", which
in our case means that there is $r>0$  such that $U^i_t-L^i_t>r$,
$t\in\Rp$. We even allow that $L^i_t=U^i_t$. Unfortunately, we are
not able to extend our methods  to  more general  moving convex
sets or  prox-regular moving sets (we think that it is  not
possible apart from  the case of functions $g$ depending purely on
time).

As a matter of fact, in the present paper we study more general
than (\ref{eq1.4}) equations in which the  driving processes  may
have jumps, that is equations of the form
\begin{equation}
\label{eq1.5} X_t = X_0 + \int_0^t f(s,X_{s-})\,dA_s+\int_0^t
g(s,X_{s-})\,dZ_s+K_t, \quad t\in\mathbb R^+.
\end{equation}
where  $A$ is a one-dimensional c\`adl\`ag process with locally
bounded variation and $Z$ is a $d$-dimensional c\`adl\`ag process
with locally bounded   $p$-variation for some $1<p<2$ (note that
$B^H$ has locally bounded $p$-variation only for $p\in(1/H,\infty)$).

The paper is organized as follows.

In Section 2 we consider the  deterministic extended Skorokhod
problem $x=y+k$ associated with a c\`adl\`ag $d$-dimensional
function $y$ (i.e. $y\in\Dd$) and time dependent  barriers
$l,u\in\Dd$ such that $l\leq u$, which means that $l_t\leq u_t$,
$t\in\Rp$ and  $l_0\leq y_0\leq u_0$. We show that for fixed $l,u$
the mapping $y\mapsto(x,k)$ is Lipschitz continuous  in the
$p$-variation norm. It is worth noting here that in \cite[Remark
3.6]{fr1} it is observed that $y\mapsto(x,k)$ is not Lipschitz
continuous in the $\lambda$-H\"older norm and that for  that
reason in \cite{fr1} the authors were not able to obtain global
uniqueness.

In  Section 3 we consider a deterministic version of
(\ref{eq1.5}). We give conditions ensuring the existence and
uniqueness of solutions. In the proof we use an analogue of the
Picard iteration method (we work in spaces equipped with the
$p$-variation norm). Our assumptions on the coefficients $f,g$ are
similar to those considered in \cite{nr}. Since our integrators
are c\`adl\`ag with bounded $p$-variation and need not be
$\lambda$-H\"older continuous, our theorem generalizes results
from \cite{nr} even in the trivial case where $C_t=\Rd$,
$t\in\Rp$.

Section 4 is devoted to the approximation of deterministic
solutions considered in Section 3. We consider two methods of
approximations. The first one is  an easy  to  implement
discrete-time method constructed in analogy with the classical
Euler scheme (it is an analogue of the so-called ``catching-up"
algorithm introduced by Moreau to prove the existence of  a
solution to (\ref{eq1.1})). We prove the convergence of the scheme
in the Skorokhod topology $J_1$ (in the case of continuous data we
obtain uniform  convergence on compact subsets of $\Rp$). The
second method uses stability of solutions of deterministic
versions of (\ref{eq1.5}) with respect to convergence of its
coefficients. More precisely, we consider family of solutions with
the coefficients $f_\epsilon,g_\epsilon$, $\epsilon>0$ instead of
$f,g$ and such that $\displaystyle{f_\epsilon\arrowk f}$,
$\displaystyle{g_\epsilon\arrowk g}$ as $\epsilon\to0$, which
means that $f_\epsilon,g_\epsilon$  tend to $f,g$ uniformly on
compact subsets of $\Rd$. We show that under some mild additional
assumptions on $f_\epsilon,g_\epsilon$ the associated  solutions
converge in the $p$-variation norm to the solution of equation
with coefficients $f,g$.

In Section  5 we apply our deterministic results  to show the the
existence and uniqueness of solutions  of SDEs of the form
(\ref{eq1.5}). To illustrate  how our results work in practice we
consider fractional SDEs (\ref{eq1.4}) and its simple
generalizations. We give conditions ensuring the existence and
uniqueness of their solutions and show how approximate them by a
simply to implement numerical scheme.

Section 6 contains the proof of Theorem \ref{thm1}.

In the sequel  we will use the  following  notation.
$\RRp=[0,\infty)$, $\Md$ is the space of $d\times d$ real matrices
$A$, with the matrix norm $\|A\|=\sup\{|Au|;u\in\Rd,|u|=1\}$,
where $|\cdot|$ denotes  the usual Euclidean norm in $\Rd$,
$B(0,N)=\{x\in\Rd; |x|\leq N\}$, $N\in\Rp$. $\Dd$ is the space of
c\`adl\`ag  mappings $x:\RRp\to\RR^d$, i.e. mappings which are
right continuous and admit left-hands limits equipped with the
Skorokhod topology  $J_1$. For $x\in\Dd$, $t>0$, we write
$x_{t-}=\lim_{s\uparrow t}x_s$, $\Delta x_t=x_t-x_{t-}$ and
$v_{p}(x)_{[a,b]} = \sup_\pi \sum_{i=1}^n |x_{t_i}-x_{t_{i-1}}|^p
<+\infty$, where the supremum is taken over all subdivisions
$\pi=\{a=t_0<\ldots<t_n=b\}$ of $[a,b]$. $\bar
V_p(x)_{[a,b]}=V_p(x)_{[a,b]}+|x_a|$, where
$V_p(x)_{[a,b]}=(v_p(x)_{[a,b]})^{1/p}$,  is the usual
$p$-variation norm. Moreover, for simplicity of notation we write
$v_p(x)_T=v_p(x)_{[0,T]}$, $V_p(x)_T= V_p(x)_{[0,T]}$ and $\bar
V_p(x)_T= \bar V_p(x)_{[0,T]}$. If $x\in\DMd$ then in the
definition of $p$-variation $v_p$ we use the matrix norm
$\|\cdot\|$ in place of the Euclidean norm. We write $x\leq x'$,
$x,x'\in\Dd$ if $x^i_t\leq x'^i_t$, $t\in\Rp$, $i=1,\dots,d$.

\nsubsection{Main estimates}

We begin with recalling the  definition of the extended Skorokhod
problem with time dependent reflecting barriers introduced in
\cite{bkr}. Let $y,l,u\in\Dd$ be such that $l\leq u$  and
$l_0\leq y_0\leq u_0$. We say that a~pair $(x,k)\in\Ddd$ is a
solution of the extended Skorokhod problem associated with $y$ and
barriers $l,u$ (and we write $(x, k)=ESP(y,l,u)$)  if
\begin{description}
\item[(i)]$ x_t = y_t+k_t\in[l_t,u_t]$, $t\in\Rp$,
\item[(ii)]$k_0=0$, $k=(k^1,\dots,k^d)$, where  for every
$0\leq t\leq q$  and $i=1,\dots,d$,
\begin{eqnarray*}
k^i_q-k^i_t\geq0,&&\quad\mbox{\rm if}\,\,x^i_s<u^i_s\,\,\mbox{\rm for
all}\,\,s\in(t,q],\\
k^i_q-k^i_t\leq0,&&\quad\mbox{\rm if}\,\,x^i_s>l^i_s\,\,\mbox{\rm for
all}\,\,s\in(t,q],\end{eqnarray*}and for every $t\in\Rp$, $ \Delta
k^i_t\geq0$ if $x^i_t<u^i_t$ and $\Delta k^i_t\leq0$ if $x^i_t>l^i_t$.
\end{description}
In \cite[Theorem 2.6]{bkr} it is proved that for any $y,l,u\in\Dd$
such that $l\leq u$  and  $l_0\leq y_0\leq u_0$ there exists a
unique solution $(x, k)=ESP(y,l,u)$.

\begin{remark}\label{rem1}
{\rm
(a) It is observed in \cite{sw} that
instead of (ii)  the following system  of conditions can be
considered:
for every $0\leq t\leq q$  and $i=1,\dots,d$
such that
$\inf_{s\in[t,q]}(u^i_s-l^i_s)>0$ the function $k^i$ has bounded
variation on $[t,q]$  and
\begin{equation}\label{eq2.1}\int_{[t,q]}(x^i_s-l^i_s)\,dk^i_s\leq0\quad\mbox{\rm
and}\quad\int_{[t,q]}(x^i_s-u^i_s)\,dk^i_s\leq0\end{equation}
(Here $\int_{[t,q]}w_s\,dv_s$ denotes the integral over the closed
interval $[t,q]$, which  is equal to $w_t\Delta
v_t+\int_t^qw_s\,dv_s$, where $\int_t^qw_s\,dv_s$ denotes the
usual integral over the half open interval $(t,q]$). Simple
calculations  show that the definitions from \cite{bkr} and
\cite{sw} are equivalent.

(b)  By (\ref{eq2.1}), if $u^i_ t>l^i_t$ then $(x^i_t-l^i_t)\Delta
k^i_t\leq 0$   and $(x^i_t-u^i_t)\Delta k^i_t\leq 0$.
Consequently, if $\Delta k^i_t>0$ then $x^i_t=l^i_t$ and if
$\Delta k^i_t<0$ then $x^i_t=u^i_t$, $i=1,\dots,d$. Therefore, for
every $t\in\Rp$,
\[
x_t=\max(\min((x_{t-}+\Delta y_t),u_t),l_t)\quad\mbox{\rm
and}\quad k_t=\max(\min(k_{t-},u_t-y_t),l_t-y_t),
\]
which means that $x_t$ is the  projection of $x_{t-}+\Delta y_t$
on the interval $[u_t,l_t]$  and $k_t$ is the projection of
$k_{t-}$ on the interval  $[u_t-y_t,l_t-y_t]$.

(c) In the classical Skorokhod problem it is assumed that the
function $k$ has bounded variation on  each  bounded interval
$[t,q]$, or, equivalently,  $k=k^{(+)}-k^{(-)}$,
 where $k^{(+),i}$, $k^{(-),i}$ are nondecreasing right continuous
functions with $k_0=k^{(+)}_0=k^{(-)}_0=0$ such that $k^{(+),i}$
increases only on $\{t;x^i_t={l^i}_t\}$ and  $k^{(-),i}$ increases
only on $\{t;x^i_t={u^i}_t\}$, $i=1,\dots,d$. If
$(x,k)=ESP(y,l,u)$   and $\inf_{t\leq T}(u_t-l_t)>\epsilon_T>0$,
$T\in\Rp$ then $k$ is a function of bounded variation and $(x,k)$
is a solution of the classical  Skorokhod problem (see, e.g.,
\cite[Corollary 2.4]{bkr}). }
\end{remark}

The Lipschitz continuity  of the mapping $(y,l,u)\mapsto(x,k)$ in
the supremum norm is well known. Let $(x,k)=ESP(y,l,u)$,
$(x',y')=ESP(k',l',u')$. By \cite[Theorem 2.1]{sw},
\[
\sup_{t\leq T} |x_t-x'_t|\leq2\sup_{t\leq T}|y_t-y'_t|
+\sup_{t\leq T}\max(|l_t-l'_t|,|u_t-u'_t|)
\]
and
\[
 \sup_{t\leq T}|k_t-k'_t|\leq\sup_{t\leq T}|y_t-y'_t|
+\sup_{t\leq T}\max(|l_t-l'_t|,|u_t-u'_t|).
\]
From this one can deduce the following stability result for
solutions of the extended Skorokhod  problem in the topology
$J_1$. Assume that $(x^n, k^n)=ESP(y^n,l^n,u^n)$, $n\in\N$, $(x,
k)=ESP(y,l,u)$   and $(y^n,l^n,u^n)\lra(y,l,u)$ in $\Dddd$. Then
\begin{equation}\label{eq2.2}
(x^n,k^n,y^n,l^n,u^n)\lra(x,k,y,l,u)\,\,\mbox{\rm in}\,\, \Dddddd
\end{equation}
(see \cite[Theorem 2.6]{bkr}  or \cite[Theorem 2.8]{sw}). Below we
show that in the case of fixed barriers $l,u$ the Lipschitz
continuity  of the mapping $y\mapsto(x,k)$ also holds in the
$p$-variation norm. We  first consider the case $d=1$.
\begin{theorem}\label{thm1}
Assume  that $y^1,y^2,l,u\in\D$ are such that $l_0\leq
y^1_0,y^2_0\leq u_0$ and $l\leq u$. Let $(x^j, k^j)=ESP(y^j,l,u)$,
$j=1,2$.  Then for any $T\in\Rp$
\[
\bar V_p(k^1-k^2)_T\leq \bar V_p(y^1-y^2)_T.
\]
\end{theorem}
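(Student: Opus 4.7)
Write $h=k^1-k^2$, $w=y^1-y^2$, and $\delta=x^1-x^2=w+h$. Since the Skorokhod problem forces $k^j_0=0$, we have $h_0=0$ and $\delta_0=w_0$, so the claim reduces to
\[
V_p(h)_T \leq V_p(w)_T + |w_0|.
\]

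The plan begins with a pointwise identity at jump times. At any jump $\tau$, set $z^j := x^j_{\tau-}+\Delta y^j_\tau$; by Remark \ref{rem1}(b), $x^j_\tau=P_\tau(z^j)$ and $\Delta k^j_\tau=P_\tau(z^j)-z^j$, where $P_\tau$ denotes projection onto $[l_\tau,u_\tau]$. Writing
\[
z^1-z^2 = \bigl(P_\tau(z^1)-P_\tau(z^2)\bigr) + \bigl((z^1-P_\tau(z^1))-(z^2-P_\tau(z^2))\bigr),
\]
a case analysis on the positions of $z^1,z^2$ with respect to $[l_\tau,u_\tau]$ shows that the two summands on the right always have the same sign (by monotonicity of the scalar projection), yielding the key equality
\[
|\delta_\tau| + |\Delta h_\tau| = |\delta_{\tau-}+\Delta w_\tau|,
\]
and hence $|\Delta h_\tau| \leq |\delta_{\tau-}| + |\Delta w_\tau| - |\delta_\tau|$.

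The next step is to reduce to piecewise constant data. By (2.2) the ESP map is $J_1$-continuous in the driver (for fixed barriers), and $V_p$ is lower semicontinuous under $J_1$-convergence (a consequence of the invariance of the sum $\sum|\Delta y(\pi)|^p$ under reparametrizations together with pointwise approximation along inverse time changes); thus it suffices to prove the inequality when $y^1,y^2,l,u$ are step functions whose jumps lie in a common finite set $\{\tau_1<\ldots<\tau_N\}\subset(0,T]$.

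The final and decisive step converts the local identity into a $V_p$-bound. For $p=1$ the argument is direct: on any subinterval $[s,t]$, summing $|\Delta h_\tau|\leq |\delta_{\tau-}|+|\Delta w_\tau|-|\delta_\tau|$ over the jumps in $(s,t]$ and telescoping via $\delta_{\tau-}=\delta_{\text{previous jump}}$ gives $\sum_{\tau\in(s,t]}|\Delta h_\tau|\leq (|\delta_s|-|\delta_t|)_+ + \sum_{\tau\in(s,t]}|\Delta w_\tau|$, and a second summation over a partition of $[0,T]$ together with $|\delta_0|=|w_0|$ yields $V_1(h)_T\leq V_1(w)_T+|w_0|$. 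The main obstacle is the case $p>1$: the $p$-variation is a \emph{supremum} over partitions and is not additive over jumps (merging same-signed jumps of $w$ strictly increases the $p$-sum), so the local identity does not lift to a $V_p$-bound by direct summation. My plan is, given any test partition $\pi$ of $[0,T]$ for $h$, to construct an adaptive refinement $\pi'$ for $w$ that inserts intermediate points at precisely those jump times where $|\delta|$ changes significantly, and to charge each contribution $|h(t_i)-h(t_{i-1})|^p$ either to a corresponding term of the $p$-sum for $w$ along $\pi'$ or to the boundary contribution arising from the initial discrepancy $|w_0|$, using the pointwise identity together with the elementary inequalities $|a+b|^p\leq(|a|+|b|)^p$ and $(a+b)^p\geq a^p+b^p$ valid for $a,b\geq 0$ and $p\geq 1$.
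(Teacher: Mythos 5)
Your local jump identity is correct: with $z^j=x^j_{\tau-}+\Delta y^j_\tau$, both $z\mapsto P_\tau(z)$ and $z\mapsto z-P_\tau(z)$ are nondecreasing, so the two summands in your decomposition of $z^1-z^2$ do have the same sign and $|\delta_\tau|+|\Delta h_\tau|=|\delta_{\tau-}+\Delta w_\tau|$; the reduction to step data via $J_1$-stability and lower semicontinuity of $V_p$ is essentially the paper's own Step 2, and your $p=1$ telescoping is fine. But the $p=1$ case was already known (Remark \ref{rem2}(a)); the entire content of the theorem is $p>1$, and there your argument stops at a ``plan''. That plan runs into a genuine obstruction which you half-identify but do not resolve: from $|\Delta h_\tau|\le(|\delta_{\tau-}|-|\delta_\tau|)+|\Delta w_\tau|$ one can only bound an increment $|h_{t_i}-h_{t_{i-1}}|$ by the sum of a $|\delta|$-difference and a total-variation-type quantity $\sum_\tau|\Delta w_\tau|$, and since $(a+b)^p\ge a^p+b^p$ for $a,b\ge0$, $p\ge1$, the $p$-th power of such a two-term bound cannot be split and ``charged'' separately to a $p$-variation sum of $w$ plus a boundary term. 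No adaptive refinement $\pi'$ repairs this, because the defect is in the form of the bound itself: it is not a one-to-one domination of an increment of $h$ by a single increment of $w$.

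The paper's Step 1 supplies exactly the missing mechanism. It fixes an \emph{optimal} partition $i_0<\dots<i_m$ realizing $v_p(k^1-k^2)_T$, observes that consecutive increments of $K^1-K^2$ along it must alternate in sign (see (\ref{eq2.7}); otherwise merging two consecutive increments would strictly increase the $p$-sum, contradicting optimality), and then, by tracking the last indices at which each $k^j$ touches a barrier, constructs interleaved times $i_{k-1}\le r^\wedge_k,r^\vee_k\le i_k$ with $Y^2_{r^\wedge_k}-Y^1_{r^\wedge_k}\le K^1_{i_k}-K^2_{i_k}\le Y^2_{r^\vee_k}-Y^1_{r^\vee_k}$ (see (\ref{eq2.8})--(\ref{eq2.9})). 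Combined with the sign alternation, this yields a one-to-one comparison $|(K^1_{i_k}-K^2_{i_k})-(K^1_{i_{k-1}}-K^2_{i_{k-1}})|\le|(Y^1_{\tilde r_k}-Y^2_{\tilde r_k})-(Y^1_{\tilde r_{k-1}}-Y^2_{\tilde r_{k-1}})|$ along an increasing sequence $\tilde r_1<\dots<\tilde r_m$, and this is what survives raising to the power $p$ and summing. To complete your proof you would need to reproduce this alternation-plus-sandwich construction (or an equivalent one-to-one increment domination); as written, the proposal does not establish the theorem for $p>1$.
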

Since our proof involves  some technical one-dimensional arguments
not associated with the rest of the paper, we defer the proof of
Theorem \ref{thm1} to Section 6.

\begin{remark}\label{rem2}
{\rm (a) The case $p=1$ was studied earlier in \cite[Theorem 2.14]{sw} (see
also \cite{sa}).

(b) In Ferrante and Rovira \cite[Remark 3.6]{fr1} it is observed
that  property stated in Theorem \ref{thm1} does not hold in
$\lambda$-H\"older norm.

(c) \cite[Example 2.15]{sw} shows that it is not possible to omit
the assumption that $l=l'$  and $u=u'$.}
\end{remark}

\begin{corollary}\label{cor1}
Assume  $y,y',l,u\in\Dd$ are such that $l_0\leq y_0,y'_0\leq u_0$.
Let $(x, k)=ESP(y,l,u)$ and $(x', k')=ESP(y',l,u)$.
Then for any $T\in\Rp$,
\[
\bar V_p(x-x')_T \leq (d+1) \bar V_p( y-y')_T\quad and\quad \bar
V_p(k-k')_T \leq d\bar V_p(y-y')_T.
 \]
\end{corollary}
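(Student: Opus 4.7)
The plan is to reduce Corollary \ref{cor1} to the one-dimensional Theorem \ref{thm1} by exploiting the product structure of the barriers $l,u$, since in this paper $C_t = \times_{i=1}^d [l^i_t, u^i_t]$. First I would observe that the extended Skorokhod problem in $\Dd$ decouples coordinate by coordinate: the conditions (i)--(ii) defining $ESP(y,l,u)$ involve only the $i$-th components of $y$, $l$, $u$, $x$ and $k$ separately, so $(x^i,k^i)=ESP(y^i,l^i,u^i)$ in $\D$ (and similarly for the primed version) for each $i=1,\dots,d$. This is where the assumption that $l, u$ are the same for both problems is crucial, matching the scope of Theorem \ref{thm1}.

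Second, I would apply Theorem \ref{thm1} coordinate-wise to obtain
\[
\bar V_p(k^i-k'^i)_T \leq \bar V_p(y^i-y'^i)_T, \qquad i=1,\dots,d.
\]
Third, I would combine these into a vector estimate. The key tool is the Minkowski-type inequality for $p$-variation: since $|z_t-z_s| \leq \sum_{i=1}^d |z^i_t - z^i_s|$, Minkowski's inequality applied to the $\ell^p$ sum over subdivision points gives $V_p(z)_T \leq \sum_{i=1}^d V_p(z^i)_T$, while conversely $|z^i_t-z^i_s|\leq|z_t-z_s|$ yields $V_p(z^i)_T\leq V_p(z)_T$. Using both inequalities, together with $k_0-k'_0=0$ and $x_0-x'_0=y_0-y'_0$, I would compute
\[
V_p(k-k')_T \leq \sum_{i=1}^d V_p(k^i-k'^i)_T \leq \sum_{i=1}^d \bar V_p(y^i-y'^i)_T \leq d\,\bar V_p(y-y')_T,
\]
which, since $k_0=k'_0=0$, is exactly $\bar V_p(k-k')_T \leq d\,\bar V_p(y-y')_T$.

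Finally, for the estimate on $x-x'$ I would use $x-x' = (y-y')+(k-k')$ and the triangle inequality for $\bar V_p$ (sub-additivity of $V_p$ plus the identity $x_0-x'_0=y_0-y'_0$), giving
\[
\bar V_p(x-x')_T \leq \bar V_p(y-y')_T + \bar V_p(k-k')_T \leq (d+1)\,\bar V_p(y-y')_T.
\]
There is no real obstacle here; the only thing to be careful about is that the norm $\bar V_p$ uses the Euclidean norm on $\R^d$, so the reduction to coordinates introduces the factor $d$ through the two-sided comparison between $|\cdot|_2$ and the coordinate absolute values, which is exactly what produces the constants $d$ and $d+1$ in the statement.
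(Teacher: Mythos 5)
Your proof is correct and follows essentially the same route as the paper: decouple the extended Skorokhod problem coordinate-wise, apply the one-dimensional Theorem \ref{thm1} to each component, recombine via a norm comparison between the Euclidean norm and the coordinate $p$-variations to get the factor $d$, and finish with the triangle inequality $\bar V_p(x-x')_T \leq \bar V_p(y-y')_T+\bar V_p(k-k')_T$. The only (immaterial) difference is that the paper recombines via $\bar V_p(k-k')_T\leq d^{(p-1)/p}(\sum_{i=1}^d v_p(k^i-k'^i)_T)^{1/p}$ rather than your Minkowski bound $V_p(k-k')_T\leq\sum_{i=1}^d V_p(k^i-k'^i)_T$; both yield the constant $d$.
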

\begin{proof}
By Theorem \ref{thm1},
\begin{align*}
\bar V_p(k-k')_T\leq d^{(p-1)/p}(\sum_{i=1}^dv_p(k^i-
k'^i)_{T})^{1/p}
&\leq d^{(p-1)/p}(\sum_{i=1}^d\bar V_p(  y^i-y'^i)^p_{T})^{1/p}\\
&\leq d\max_i \bar V_p( y^i- y'^i)_{T} \leq d\bar V_p(y-
y')_T.
\end{align*}
Since $\bar V_p(x-x')_T \leq \bar V_p(y-y')_T+\bar  V_p( k-k')_T$,
the proof is complete.
\end{proof}
\begin{corollary}
\label{cor2}Assume  $y,l,h,u\in\Dd$ are such that $l_0\leq y_0\leq
u_0$,  $l\leq h\leq u$. Let $(x, k)=ESP(y,l,u)$. Then for any
$T\in\Rp$,
\[
\bar V_p(x)_T \leq (d+1) \bar V_p( y)_T+d\bar V_p( h)_T\quad and
\quad\bar V_p(k)_T \leq d \bar V_p(y)_T+d\bar V_p(h)_T.
\]
\end{corollary}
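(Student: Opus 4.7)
The plan is to reduce Corollary \ref{cor2} to Corollary \ref{cor1} by a well-chosen ``trivial'' reference solution, namely the $ESP$ associated with $h$ itself. The key observation is that since $l\leq h\leq u$, the constant regulator $k'\equiv 0$ together with $x'=h$ satisfies conditions (i) and (ii) of the extended Skorokhod problem with input $h$ and barriers $l,u$: indeed $x'_t=h_t\in[l_t,u_t]$, $k'_0=0$, and the monotonicity conditions on $k'$ as well as on $\Delta k'$ hold trivially. By the uniqueness asserted after the definition of $ESP$, therefore $ESP(h,l,u)=(h,0)$.

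With this in hand, apply Corollary \ref{cor1} to the pair $y$ and $y':=h$ (which is legitimate because $l_0\leq h_0\leq u_0$). The second inequality of Corollary \ref{cor1} gives
\[
\bar V_p(k)_T=\bar V_p(k-0)_T\leq d\,\bar V_p(y-h)_T,
\]
and the triangle inequality for the $p$-variation seminorm together with $|y_0-h_0|\leq|y_0|+|h_0|$ yields $\bar V_p(y-h)_T\leq \bar V_p(y)_T+\bar V_p(h)_T$. Combining these two inequalities immediately produces the announced bound on $\bar V_p(k)_T$.

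For the bound on $\bar V_p(x)_T$, I would not apply the first inequality of Corollary \ref{cor1} directly (this would yield the worse constant $d+1$ in front of $\bar V_p(h)_T$). Instead, I would use the identity $x=y+k$ and the fact that $k_0=0$ implies $x_0=y_0$, so that
\[
\bar V_p(x)_T\leq V_p(y)_T+V_p(k)_T+|y_0|\leq \bar V_p(y)_T+\bar V_p(k)_T,
\]
and then substitute the bound on $\bar V_p(k)_T$ just established to recover $(d+1)\bar V_p(y)_T+d\,\bar V_p(h)_T$.

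There is no real obstacle here; the only subtle point is noticing that the choice $y'=h$ turns Corollary \ref{cor1} into an inequality involving $\bar V_p(h)_T$ (via the triangle inequality) without producing the suboptimal constant $d+1$ on $\bar V_p(h)_T$, which one avoids by routing the $x$-estimate through $\bar V_p(x)_T\leq\bar V_p(y)_T+\bar V_p(k)_T$ rather than through the first inequality of Corollary \ref{cor1}.
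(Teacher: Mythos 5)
Your proposal is correct and follows essentially the same route as the paper: the key observation $(h,0)=ESP(h,l,u)$, the application of the second inequality of Corollary \ref{cor1} to $y$ and $y'=h$ together with the triangle inequality for $\bar V_p$, and the derivation of the $x$-bound from $x=y+k$ (which is exactly what the paper means by ``from the second inequality we immediately get the first one''). Nothing is missing.
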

\begin{proof}
Note that $(h,0)=ESP(h,l,u)$. By Corollary \ref{cor1},
\[
\bar V_p(k)_T \leq  d
\bar V_p(y-h)_T\leq d\bar V_p(y)_T+d \bar V_p(h)_T,
\]
i.e. the second inequality of the corollary is satisfied. From the
second inequality we immediately get the first one.
\end{proof}

\nsubsection{Deterministic equations with reflecting boundary condition}

Let $a\in\D$, $z,l,u\in\Dd$ be such that $V_1(a)_T$,
$V_p(z)_T<\infty$ for $T\in\Rp$ and  $l\leq
u$. We also assume that there is $h\in\Dd$ such that $l\leq h\leq
u$ and  $V_p(h)_T<\infty$ for $T\in\Rp$. This additional
assumption is indispensable to  ensure that $(x,k)=ESP(y,l,u)$
have  bounded $p$-variation  for any  bounded  $p$-variation
function $y$ (it is automatically satisfied if $\inf_{t\leq
T}(u_t-l_t)>\epsilon_T>0$, $T\in\Rp$, because  in this case $k$ is
a function of  bounded  variation).

We  consider equations with reflecting time-dependent barriers of
the form
\begin{equation}\label{eq3.1}
x_t=x_0+\int_0^t f(s,x_{s-})\,da_s+\int_0^tg(s,x_{s-})\,dz_s
+k_t,\quad t\in\Rp,
\end{equation}
where $f:\Rp\times\Rd\rightarrow\Rd$,
$g:\Rp\times\Rd\rightarrow\Md$ are given functions, the
integral with respect to $z$ is a Riemann-Stieltjes integral  and  $l_0\leq x_0\leq u_0$.  We
recall that if $w\in\DMd$, $z\in\Dd$  are such that
$V_q(w)_T<+\infty$, $V_p(z)_T<+\infty$, $T\in\Rp$, where
$1/p+1/q>1$, $p,q\geq1$, then the Riemann-Stieltjes integral
$\int_0^\cdot w_{s-} dz_s$ is  well defined (see, e.g.,
\cite{du}). Moreover, it is well known
 that for any $a<b$,
\begin{equation}\label{eq3.2}
V_p(\int_a^\cdot w_{s-} dz_s)_{[a,b]}\leq C_{p,q}\bar
V_q(w)_{[a,b)}V_p(z)_{[a,b]},
\end{equation}
where $C_{p,q}=\zeta(p^{-1}+q^{-1})$ and $\zeta$ denotes the
Riemann zeta function, i.e. $\zeta(x)=\sum_{n=1}^\infty 1/n^x$.

\begin{definition}\label{def}
{\rm  We say that a~pair $(x,k)\in\Ddd$ is a solution of
(\ref{eq3.1}) if  $V_p(x)_T<\infty$ for $T\in\Rp$  and
$(x,k)=ESP(y,l,u)$, where
\[
y_t=x_0+\int_0^t f(s,x_{s-})\,da_s+\int_0^tg(s,x_{s-})\,dz_s,\quad t\in\Rp.
\]
}
\end{definition}

We will need the following conditions  on $f,g$.
\begin{enumerate}
\item[({F})] (a) There exists $L>0$ such that
\[
|f(t,x)|\leq L(1+|x|),\quad x\in \Rd, \,t\in\Rp.
\]
(b) For every
$N\in\Rp$ there exists $L_N>0$ such that
\[|f(t,x)-f(t,y)|\leq L_N|x-y|,\quad x,y\in B(0,N), \,t\in\Rp.\]
\item[({G})] (a) There exist $\beta\in(1-1/p,1]$ and  $C^\beta>0$  such that
\[
|g(t,x)-g(s,y)| \leq C^\beta(|t-s|^\beta+|x-y|),\quad x,y\in\Rd,\,t,s\in\Rp
\]
(b) $g$  is differentiable in $x$ and for every $N\in\Rp$ there
exist $\alpha_N\in(p-1,1]$ and $C_N>0$  such that
\[
|\nabla_xg(t,x)-\nabla_xg(s,y)| \leq C_N (|t-s|^\beta+|x-y|^{\alpha_N}),
\quad x,y\in B(0,N),\,t,s\in\Rp,
\]
where $\nabla_xg(t,x)=(\nabla_xg^{i,j}(t,x))_{i,j=1,\dots,d}$
and $\displaystyle{|\nabla_xg(t,x)|^2=\sum_{k=1}^d\sum_{i=1}^d
\sum_{j=1}^d|\frac{\partial g^{i,j}}{\partial x_k}(t,x)|^2}$.
\end{enumerate}

Similar sets of conditions were considered in papers on equations
without reflecting boundary condition driven by functions
(processes) with bounded $p$-variation (see
\cite{du,k1,k2,ly,nr,ru}).

\begin{remark}{\rm
Note that under ({G})(a) for every $T\in\Rp$  there exists
${C}^{\beta,T}>0$ such that for every $t\in[0,T]$ and $x\in\Rd$,
\begin{equation}\label{eq31}
|g(t,x)|\leq {C}^{\beta,T}(1+|x|),
\end{equation}
and  for  $q= p\vee(1/\beta)$ and  every $w\in\Dd$,
\begin{equation}\label{eq32}
\bar V_{q}(g(\cdot,w))_t \leq {C}^{\beta} t^\beta+ C^\beta V_p(w)_t +|g(0,x_0)|
\leq {C}^{\beta,T}(1+\bar V_p(x)_t).
\end{equation}
Moreover, ${C}^{\beta,T}=C^{\beta}( T^\beta+1)+|g(0,0)|$.}
\end{remark}

We will approximate solutions of (\ref{eq3.1}) by using an
analogue of the Picard iteration method. Set
$(x^0,k^0)=ESP(x_0,l,u)$ and for any $n\in\N$ set
\begin{equation}
\label{eq3.3}
\left\{\begin{array}{rl}
y^n&=x_0+\int_0^\cdot f(s,x^{n-1}_{s-})\,da_s
+\int_0^\cdot g(s,x^{n-1}_{s-})\,dz_s,\\
(x^n,k^n)&=ESP(y^n,l,u),
\end{array}
\right.
\end{equation}
where the integral with respect to $z$  is the Riemann-Stieltjes
integral.  Note that (\ref{eq3.3}) is well defined if ({F})(a) and
({G})(a) are satisfied. Indeed, by Corollary \ref{cor2},
\begin{equation}\label{eq3.4}
\bar V_p(x^0)_T \leq (d+1)|x_0|+d\bar V_p(h)_T, \quad T\in\Rp
\end{equation}
and for any $n\in\N$,
\begin{align*}
\bar V_p(x^n)_T &\leq (d+1)\bar V_p(y^n)_T+d\bar V_p(h)_T\\
& \leq (d+1)\left[|x_0|+V_p(\int_0^\cdot
f(s,x_{s-}^{n-1})\,da_s)_T+V_p(\int_0^\cdot
g(s,x_{s-}^{n-1})\,dz_s)_T\right]+d\bar V_p(h)_T.
\end{align*}
Moreover,
\[
V_p(\int_0^\cdot f(s,x^{n-1}_{s-})\,da_s)_T
\leq \sup_{s\leq T}|f(s,x^{n-1}_{s-})|V_1(a)_T
\leq L(1+\bar V_p(x^{n-1})_T)V_1(a)_T
\]
and by (\ref{eq3.2})  and   (\ref{eq32}), for $q= p\vee(1/\beta)$
we have
\[
V_p(\int_0^\cdot g(s,x^{n-1}_{s-})\,dz_s)_T\leq C_{p,q}\bar
V_{q}(g(\cdot,x^{n-1}))_TV_p(z)_T \leq C_{p,q}{C}^{\beta,T}(1+\bar
V_{p}(x^{n-1})_T)V_p(z)_T.
\]
Hence, in particular,  $\bar V_p(x^n)_T<\infty$ for $n\in\N$,
$T\in\Rp$. In fact, under ({F})(a)  and  ({G})(a) we have
\begin{equation}
\label{eq3.5}
\sup_n\bar  V_p(x^n)_T<\infty,\quad T\in\Rp.
\end{equation}
To check this,  fix $T\in\Rp$ and set $C_0=(d+1)|x_0|+d\bar
V_p(h)_T$, $C_1=(d+1)\max(L,C_{p,q}{C}^{\beta,T})$. Observe that
by the above estimates  for any $t\leq T$ we have $\bar
V_p(x^0)_t\leq C_0$ and
\[
\bar V_p(x^n)_t\leq C_0+C_1(1+\bar V_{p}(x^{n-1})_t)(V_1(a)_t+V_p(z)_t),
\quad n\in\N.\]
If we set $t_1=\inf\{t;C_1(V_1(a)_t+V_p(z)_t)>1/2\}\wedge T$ then
\[\bar V_p(x^n)_{t_1-}\leq C_0+\frac12+\frac12\bar V_{p}(x^{n-1})_{t_1-},
\quad n\in\N,\]
which implies that $\sup_n\bar V_p(x^n)_{t_1-}\leq 2(C_0+1/2)$. Since
\[
|\Delta x^n_{t_1}|\leq |f(t_1,x^{n-1}_{t_1-})\Delta a_{t_1}|
+|g(t_1,x^{n-1}_{t_1-})\Delta z_{t_1}|+\max(|\Delta l_{t_1}|,|\Delta u_{t_1}|),
\]
it is clear that
\begin{equation} \label{eq3.8}
\sup_n\bar V_p(x^n)_{t_1}<\infty.
\end{equation}
Set
$t_k=\inf\{t>t_{k-1};C_1(V_1(a)_{[t_{k-1},t]}+V_p(z)_{[t_{k-1},t]})>1/2\}\wedge
T$, $k\geq2$. Modifying slightly the proof of (\ref{eq3.8}) on can
show that $\sup_n \bar V_{p}(x^n)_{[t_{k-1},t_k]} <\infty$. What
is left  is to show that $m=\inf\{k;t_k=T\}$ is finite. To see
this, without loss of generality assume that $C_1\geq1$. Observe
that $1/2<C_1(V_1(a)_{[t_{k-1},t_k]}+V_p(z)_{[t_{k-1},t_k]})\leq
2\max(V_1(a)_{[t_{k-1},t_k]},V_p(z)_{[t_{k-1},t_k]})$ for each
$k$, which implies that
$(1/4)^p<V_1(a)_{[t_{k-1},t_k]}+v_p(z)_{[t_{k-1},t_k]}$, $k\in\N$.
Consequently,
\begin{equation}\label{eq3.6}
m(\frac{1}{4})^p< \sum_{k=1}^m
(V_1(a)_{[t_{k-1},t_k]}+v_p(z)_{[t_{k-1},t_k]})
\leq (V_1(a)_{T}+v_p(z)_{T})<\infty,
\end{equation}
which  completes the proof of (\ref{eq3.5}).

\begin{theorem}\label{thm2}
Assume ({F}), ({G})  and that there exists $h\in\Dd$ such that
$l\leq h\leq u$ and $V_p(h)_T<\infty$, $T\in\Rp$. Let
$\{(x^n,k^n)\}$ denote the sequence of Picard's iterations defined
by (\ref{eq3.3}). Then for every $T\in\Rp$,
\[
\bar V_p(x^n-x)_T\rightarrow 0\quad and  \quad \bar V_p(k^n-k)_T\rightarrow 0,
\]
where  $(x,k)$ is a unique solution of  (\ref{eq3.1}).
\end{theorem}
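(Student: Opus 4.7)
The plan is to run a Picard-style contraction directly on the sequence $\{(x^n,k^n)\}$ produced by (\ref{eq3.3}), measuring convergence in the $p$-variation norm and using the same time-slicing device that produced the uniform bound (\ref{eq3.5}). By Corollary~\ref{cor1} applied to $(x^n,k^n)=ESP(y^n,l,u)$ and $(x^{n+1},k^{n+1})=ESP(y^{n+1},l,u)$,
$$\bar V_p(x^{n+1}-x^n)_t \leq (d+1)\bar V_p(y^{n+1}-y^n)_t,\qquad \bar V_p(k^{n+1}-k^n)_t \leq d\bar V_p(y^{n+1}-y^n)_t,$$
so the whole task is to control $\bar V_p(y^{n+1}-y^n)_t$ by a small multiple of $\bar V_p(x^n-x^{n-1})_t$.

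Fix $T\in\Rp$; by (\ref{eq3.5}) I may choose $N$ so that all iterates $x^n$ stay in $B(0,N)$ on $[0,T]$. Splitting
$$y^{n+1}_t-y^n_t = \int_0^t[f(s,x^n_{s-})-f(s,x^{n-1}_{s-})]\,da_s + \int_0^t[g(s,x^n_{s-})-g(s,x^{n-1}_{s-})]\,dz_s,$$
the first integral is bounded in $p$-variation by $L_N V_1(a)_t \bar V_p(x^n-x^{n-1})_t$ via (F)(b) and the elementary estimate against a bounded-variation integrator. For the second integral I apply (\ref{eq3.2}) with $q=p\vee(1/\beta)$, which reduces the task to the composition bound
$$\bar V_q\bigl(g(\cdot,x^n)-g(\cdot,x^{n-1})\bigr)_t \leq R_N\bigl(1+\bar V_p(x^n)_t^{\alpha_N}+\bar V_p(x^{n-1})_t^{\alpha_N}+t^\beta\bigr)\bar V_p(x^n-x^{n-1})_t,$$
which follows from (G)(a)--(b) via a mean-value argument in $x$, in the style of \cite{du,nr}. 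The role of the hypothesis $\alpha_N>p-1$ is precisely to ensure that this composition bound is \emph{linear} in $\bar V_p(x^n-x^{n-1})_t$.

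Combining the two estimates yields $\bar V_p(x^{n+1}-x^n)_t \leq G(t)\bar V_p(x^n-x^{n-1})_t$ with $G$ depending only on $N$, $V_1(a)$ and $V_p(z)$, and satisfying $G(t)\downarrow 0$ as $t\downarrow 0$. Exactly as in the derivation of (\ref{eq3.6}), I partition $[0,T]$ into finitely many pieces $0=t_0<\cdots<t_m=T$ on which $G$ contributes a factor $\leq 1/2$, and the possible jumps at the $t_k$ are absorbed by an analogue of (\ref{eq3.8}). On each slice $(x^n)$ is Cauchy in the $p$-variation norm with geometric rate, which glues to a common limit $(x,k)$ on $[0,T]$; continuity of the Riemann--Stieltjes integrals with respect to the $p$-variation norm together with Corollary~\ref{cor1} let me pass to the limit in (\ref{eq3.3}), so that $(x,k)$ solves (\ref{eq3.1}) in the sense of Definition~\ref{def}. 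Uniqueness follows by running the same contraction argument on the difference of two candidate solutions. The principal obstacle is precisely the composition estimate for $g$: without $\alpha_N>p-1$ the mean-value bound leaves a factor $\bar V_p(x^n-x^{n-1})^\alpha$ with $\alpha<1$, which does not close to a genuine contraction; this is where condition (G)(b) is essential, and also where the c\`adl\`ag nature of $z$ requires some care with the integration-by-parts formula on closed intervals.
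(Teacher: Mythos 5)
Your architecture is the same as the paper's: Picard iteration measured in the $p$-variation norm, Corollary \ref{cor1} to transfer estimates from $y^n$ to $(x^n,k^n)$, the a priori bound (\ref{eq3.5}), a mean-value composition estimate for $g$, time-slicing of $[0,T]$ into finitely many intervals on which the contraction factor is at most $1/2$, explicit treatment of the jump at each slice endpoint, and uniqueness by rerunning the contraction on the difference of two solutions. All of that matches Steps 1 and 2 of the paper's proof.

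There is, however, one genuine gap, and it sits exactly at the step you yourself single out as the principal obstacle. The composition bound you assert, $\bar V_q\big(g(\cdot,x^n)-g(\cdot,x^{n-1})\big)_t\leq R_N(\cdots)\,\bar V_p(x^n-x^{n-1})_t$ with $q=p\vee(1/\beta)$, is false in general when $\alpha_N<1$. The mean-value computation produces the term $C_N|x^n_s-x^{n-1}_s|\,\big(|t-s|^\beta+|x^n_t-x^n_s|^{\alpha_N}+|x^{n-1}_t-x^{n-1}_s|^{\alpha_N}\big)$, and after raising to the power $q$ and summing over a partition one is left with $\sup_s|x^n_s-x^{n-1}_s|^q\sum_i|x^n_{t_i}-x^n_{t_{i-1}}|^{\alpha_N q}$. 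Unless $\alpha_N q\geq p$, i.e.\ unless $p\vee(1/\beta)\geq p/\alpha_N$, this last sum is not controlled by $V_p(x^n)_t$ and can diverge under refinement of the partition (the factor $|x^n_s-x^{n-1}_s|$ is a pointwise value, not an increment, so no H\"older interpolation rescues it). The correct variation index is $r=(p/\alpha_N)\vee(1/\beta)$ --- this is the paper's Lemma \ref{lem3.2} --- and one must then verify that Young's estimate (\ref{eq3.2}) still applies to $\int_0^\cdot\big(g(s,x^{n-1}_{s-})-g(s,x^{n-2}_{s-})\big)\,dz_s$, i.e.\ that $1/p+1/r>1$. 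That is precisely where $\alpha_N>p-1$ and $\beta>1-1/p$ are used. So your diagnosis of the role of $\alpha_N>p-1$ (``without it the bound leaves a sublinear power $\bar V_p(\cdot)^\alpha$'') is not the actual mechanism: linearity in $\bar V_p(x^n-x^{n-1})_t$ comes for free from bounding $|x^n_s-x^{n-1}_s|$ by its supremum; what is at stake is the admissibility of the exponent pair $(r,p)$ in the Young integral. Once the index is corrected to $r=(p/\alpha_N)\vee(1/\beta)$ and the Young pairing is checked, the remainder of your argument (slicing, jump handling, gluing, passage to the limit, uniqueness) goes through as in the paper.
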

\begin{proof} {\em Step 1.} Convergence of Picard's iteration.
Fix $T\in\Rp$.  Since $x^n_0=x^{n-1}_0=x_0$, applying  Corollary
\ref{cor1} we get
\begin{align*}
\bar V_p(x^n&-x^{n-1})_t= V_p(x^n-x^{n-1})_t\\
&\leq (d+1) V_p(\int_0^{\cdot}
f(s,x^{n-1}_{s-})-f(s,x^{n-2}_{s-})\,da_s
+\int_0^{\cdot}g(s,x^{n-1}_{s-})-g(s,x^{n-2}_{s-})\,dz_s )_t
\\
&\leq (d+1) V_p(\int_0^{\cdot}
f(s,x^{n-1}_{s-})-f(s,x^{n-2}_{s-})\,da_s)_t
\\
&\quad+(d+1)
V_p(\int_0^{\cdot}g(s,x^{n-1}_{s-})-g(s,x^{n-2}_{s-})\,dz_s )_t
\end{align*}
for $t\in[0,T]$. By (\ref{eq3.5}), $\sup_{t\leq T}|x^n_t|\leq N$
for  $n\in\N$, where $N=\sup_n\bar  V_p(x^n)_T$.  Therefore
\begin{align*}
 V_p(\int_0^{\cdot} f(s,x^{n-1}_{s-})-f(s,x^{n-2}_{s-})\,da_s)_t&\leq
L_NV_1(a)_t\sup_{s\leq t}|x^{n-1}_s-x^{n-2}_s|\\
& \leq L_NV_1(a)_t
V_p(x^{n-1}-x^{n-2})_t
\end{align*}
and by (\ref{eq3.2}),
\[
V_p(\int_0^{\cdot}g(s,x^{n-1}_{s-})-g(s,x^{n-2}_{s-})\,dz_s )_t
\leq C_{p,r} V_{r}(g(\cdot,x^{n-1})-g(\cdot,x^{n-2}))_tV_p(z)_t,
\]
where $r=(p/\alpha_N)\vee(1/\beta)$. To estimate the right
hand-side of the last inequality we  will use the following lemma.
\begin{lemma}
\label{lem3.2} If $g:\Rp\times\Rd\rightarrow\R$ satisfies {\rm
({G})} then for any  $x,y\in\Dd$, $T,N\in\Rp$ such that
$V_p(x)_T<\infty$, $V_p(y)_T<\infty$,  $\sup_{t\leq T}|x_t|$,
$\sup_{t\leq T}|y_t|\leq N$ and $r=(p/\alpha_N)\vee(1/\beta)$  we
have
\[
V_{r}(g(\cdot,x)-g(\cdot,y))_T\leq C^\beta V_{r}(x-y)_T
+C_N\sup_{t\leq T}|x_t-y_t|
\Big(T^\beta+V_p(x)_T^{\alpha_N}+V_p(y)_T^{\alpha_N}\Big).
\]
\end{lemma}
{\sc Proof of the lemma.} For every $t,s\in[0,T]$,
\begin{align*}
&|g(t,x_t)-g(t,y_t)-g(s,x_s)+g(s,y_s)|\\& \qquad=\Big|\int_0^1
\nabla_xg(t,\theta x_t+(1-\theta)y_t)(x_t-y_t)-\nabla_xg(s,\theta
x_s+(1-\theta)y_s)(x_s-y_s)d\theta\Big|.
\end{align*}
Hence
\begin{align*}
&|g(t,x_t)-g(t,y_t)-g(s,x_s)+g(s,y_s)|\\
&\qquad\leq\int_0^1 |\nabla_xg(t,\theta x_t+(1-\theta)y_t)|
|(x_t-y_t-x_s+y_s)|d\theta\\
&\qquad\quad+\int_0^1 |\nabla_xg(t,\theta x_t
+(1-\theta)y_t)-\nabla_xg(s,\theta x_s+(1-\theta)y_s)||(x_s-y_s)|d\theta\\
&\qquad\leq C^\beta|x_t-y_t-x_s+y_s|
+C_N|x_s-y_s|(|t-s|^\beta+|x_t-x_s|^{\alpha_N}+|y_t-y_s|^{\alpha_N}).
\end{align*}
Applying this estimate to each pair $t=t_i$, $s=t_{i-1}$ from an
arbitrary partition $0=t_0<t_1<\dots<t_n=T$ of $[0,T]$ and using
Minkowski's inequality, we obtain the desired result. \hfill
{$\Box$}

By Lemma \ref{lem3.2}, for  $i,j=1,\dots,d$ we have
\begin{align*}
V_{r}(g_{i,j}(\cdot,x^{n-1})&-g_{i,j}(\cdot,x^{n-2}))_t
\leq C^\beta V_{r}(x^{n-1}-x^{n-2})_t\\
&\quad+C_N\sup_{s\leq t}|x^{n-1}_s-x^{n-2}_s|
\Big(t^\beta+V_p(x^{n-1})_t^{\alpha_N}+V_p(x^{n-2})_t^{\alpha_N}\Big),
\end{align*}
which implies that
\begin{align*}
V_{r}(g(\cdot,x^{n-1})&-g(\cdot,x^{n-2}))_t \leq\sum_{i,j=1}^{d}
V_{r}(g_{i,j}(\cdot,x^{n-1})-g_{i,j}(\cdot,x^{n-2}))_t\\
&\leq (C^\beta)^{d^2} V_{r}(x^{n-1}-x^{n-2})_t\\
&\quad+(C_N)^{d^2}\sup_{s\leq t}|x^{n-1}_s-x^{n-2}_s|
\Big(t^\beta+V_p(x^{n-1})_t^{\alpha_N}+V_p(x^{n-2})_t^{\alpha_N}\Big).
\end{align*}
From the above estimates, (\ref{eq3.5}) and fact that
\[\sup_{s\leq t}|x^{n-1}_s-x^{n-2}_s|\leq V_p(x^{n-1}-x^{n-2})_t,\quad t\in\Rp
\]
we conclude that there exists   $D>0$ depending only on $C_{p,r}$,
$C^\beta$, $C_N$, $\alpha_N$, $\beta$, $L_N$, $T$ and  $d$  such
that for every $n\in\N$,
\begin{equation}
\bar V_p(x^n-x^{n-1})_t\leq D(V_1(a)_t +V_p(z)_t)\bar V_p(x^{n-1}-x^{n-2})_t.
\label{eq3.7}
\end{equation}
Set $t_1=\inf\{t>0;(D(V_1(a)_t+V_p(z)_t))\geq \frac 12\}\wedge T$
and observe that by induction,
\[
\bar V_p(x^n-x^{n-1})_{t_1-}\leq 2^{-(n-1)}\bar
V_p(x^1-x^{0})_{t_1-}, \quad n\in\N.
\]
Thus $\{x^n\}$  is  a Cauchy sequence in the space of c\`adl\`ag
functions on $[0,t_1)$ with the $p$-variation norm. Therefore
there is a c\`adl\`ag function $x$ such that $\bar
V_p(x^n-x)_{t_1-}\lra0$. This implies that $V_p(\int_0^\cdot
(f(s,x^n_{s-})-f(s,x_{s-}))\,da_s)_{t_1-}\lra0$  and
$V_p(\int_0^\cdot (g(s,x^n_{s-})-g(s,x_{s-}))\,dz_s)_{t_1-}\lra0$,
and hence that there exists a c\`adl\`ag  function $k$ such that
$\bar V_p(k^n-k)_{t_1-}\lra0$   and $(x,k)$ is a solution of
(\ref{eq3.1}) on the interval $[0,t_1)$. If we set
\[
x_{t_1}=\max(\min(x_{t_1-}+f(t_1,x_{t_1-})\Delta a_{t_1}
+g(t_1,x_{t_1-}) \Delta z_{t_1},u_{t_1}), l_{t_1})
\]
and $k_{t_1}=k_{t_1-} + \Delta x_{t_1} - (f(t_1,x_{t_1-})\Delta
a_{t_1}+g(t_1,x_{t_1-})\Delta z_{t_1})$ then  by Remark
\ref{rem1}(b), $(x,k)$ is a solution of (\ref{eq3.1}) on the
closed interval $[0,t_1]$. Moreover,
\begin{align*}
x^n_{t_1}=&\max(\min(x^n_{t_1-}+f(t_1,x^{n-1}_{t_1-})
\Delta a_{t_1}+g(t_1,x^{n-1}_{t_1-}) \Delta z_{t_1},u_{t_1}), l_{t_1})\\
&\lra\max(\min(x_{t_1-}+f(t_1,x_{t_1-})\Delta
a_{t_1}+g(t_1,x_{t_1-}) \Delta z_{t_1},u_{t_1}),
l_{t_1})=x_{t_1},
\end{align*}
which implies that $\bar V_p(x^n-x)_{t_1}\lra0$ and  $\bar
V_p(k^n-k)_{t_1}\lra0$. It is easy to see that we can apply the
arguments used above to the interval $[t_1,t_2]$ with
$t_2=\inf\{t>0;D(V_1(a)_{[t_1,t)}+V_p(z)_{[t_1,t)})\geq \frac
12\}\wedge T$, and then to intervals $[t_2,t_3]$,
$[t_3,t_4],\dots$ Since $V_1(a)_T<\infty$ and $V_p(z)_T<\infty$,
in finitely many steps we are able to construct the solution
$(x,k)$ of (\ref{eq3.1}) on the whole interval $[0,T]$ and to show
that $\bar V_p(x^n-x)_{T}\lra0$ and $\bar V_p(k^n-k)_{T}\lra0$.

{\em Step 2.} Uniqueness of solutions of (\ref{eq3.1}). Assume
that there exists two solutions $(x^1,k^1)$  and $(x^2,k^2)$. Let
$t_1$ be defined as in Step 1. Using arguments from Step 1  we
show that $V_p(x^1-x^2)_{t_1-}\leq\frac12\bar
V_p(x^1-x^2)_{t_1-}$\,, which implies that $x^1=x^2$ on $[0,t_1)$.
Since by Remark \ref{rem1}(b) we know that
\[
x^j_{t_1}=\max(\min(x^j_{t_1-}
+f(t_1,x^j_{t_1-})\Delta a_{t_1}+g(t_1,x^j_{t_1-}) \Delta
z_{t_1},u_{t_1}),l_{t_1}),\quad j=1,2,
\]
it is clear that $x^1=x^2$  on the  closed interval $[0,t_1]$.
Applying the above argument to intervals $[t_1,t_{2}]$,
$[t_2,t_3]$, $\dots$ we show in finitely many steps that $x^1=x^2$
on $[0,T]$ for every $T\in\Rp$.
\end{proof}

\nsubsection{Discrete-time approximation and stability of solutions}

We assume that $l,h,u\in\Dd$ are such that  $l\leq
h\leq u$, $V_p(h)_T<\infty$ for $T\in\Rp$.  Let $x_0\in[l_0,u_0]$  and $a\in\D$,
$z\in\Dd$ be such that $V_1(a)_T<\infty$ and $V_p(z)_T<\infty$,
$T\in\Rp$.

Set $x^n_0=x_0$, $k^n_0=0$ and
\begin{equation}
\left\{\begin{array}{ll}
\Delta y^n_{(k+1)/n}&=f(k/n,x^n_{k/n})(a_{(k+1)/n}-a_{k/n})
+g(k/n,x^n_{k/n})(z_{(k+1)/n}-z_{k/n}),\smallskip \\
x^n_{(k+1)/n}&=\max\big(\min(x^n_{k/n}
+\Delta y^n_{(k+1)/n},u_{(k+1)/n}),l_{(k+1)/n}\big), \label{eq4.1}
\smallskip \\
k^n_{(k+1)/n}&=k^n_k+(x^n_{(k+1)/n}-x^n_{k/n})-\Delta
y^n_{(k+1)/n}
\end{array}
\right.
\end{equation}
and $x^n_t=x^n_{k/n}$, $k^n_t=k^n_{k/n}$, $t\in[k/n,(k+1)/n)$,
$k\in\N\cup\{0\}$. Since
\[
x^n_{(k+1)/n}=\Pi_{C_{{(k+1)/n}}}\big(x^n_{k/n} +\Delta
y^n_{(k+1)/n}\big),\quad k\in\N\cup\{0\},
\]
where $\Pi_{C_{{(k+1)/n}}}$ denotes the  projection on the set
$C_{{(k+1)/n}}=[l_{(k+1)/n},u_{(k+1)/n}]$, (\ref{eq4.1}) is the
well known Euler scheme for (\ref{eq3.1}) (see, e.g., \cite{s3}).
It is also an analogue  of the so-called ``catching-up" algorithm
introduced by Moreau to prove the existence of  a solution of
(\ref{eq1.1}) (see, e.g., \cite{aht}).

\begin{theorem}\label{thm3}
Let  $\{(x^n,k^n)\}$ be a sequence of
approximations defined by (\ref{eq4.1}).  If $f,g$ satisfy
\mbox{\rm ({F}), ({G})} and moreover $f$  is continuous then
\begin{equation}
\label{eq3.16} (x^n,k^n,l^n,u^n)\lra (x,k,l,u)\quad\mbox{\rm
in}\,\,\Ddddd,\end{equation} where $l^n_t=l_{k/n}$,
$u^n_t=u_{k/n}$, $t\in[k/n,(k+1)/n)$, $k\in\N\cup\{0\}$ and
$(x,k)$ is a  unique solution of (\ref{eq3.1}).
\end{theorem}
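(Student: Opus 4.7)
The plan is to reformulate the Euler scheme as an extended Skorokhod problem, establish a uniform $p$-variation bound, extract a $J_1$-convergent subsequence, and identify the limit as the unique solution of (\ref{eq3.1}) via Theorem \ref{thm2}.

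First I would recast $(x^n,k^n)$ in ESP form. Let $y^n$ be the piecewise constant c\`adl\`ag path with $y^n_0=x_0$ and jumps $\Delta y^n_{(k+1)/n}$ at the grid points $(k+1)/n$ as given in (\ref{eq4.1}). Since $l^n,u^n$ agree with $l,u$ at each grid point $(k+1)/n$ and $(x^n,k^n)$ are constant on every interval $[k/n,(k+1)/n)$, Remark \ref{rem1}(b) identifies $(x^n,k^n)=ESP(y^n,l^n,u^n)$, embedding the scheme into the framework of Section 3 with discretized input.

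Second, I would derive a uniform bound $\sup_n\bar V_p(x^n)_T<\infty$ for each $T\in\Rp$. Applying Corollary \ref{cor2} with a piecewise constant discretization of $h$, the sublinear growth (F)(a) and (\ref{eq31}), and the Young estimate (\ref{eq3.2}) give
$$\bar V_p(x^n)_t \leq C_0 + C_1(1+\bar V_p(x^n)_t)(V_1(a)_t+V_p(z)_t).$$
Splitting $[0,T]$ into finitely many intervals on which $C_1(V_1(a)+V_p(z))<1/2$ and iterating yields the uniform bound by the same argument that produced (\ref{eq3.5})--(\ref{eq3.6}).

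Third, the uniform $\bar V_p$ bound permits extraction of a subsequence along which $x^n\to\tilde x$ in $J_1$, by a Helly-type selection combined with equi-regularity of the step functions $x^n$ (whose jumps at grid points are controlled by those of $a$, $z$, $l$, $u$). The left-discretizations $l^n,u^n$ converge to $l,u$ in $J_1$ directly from right-continuity. Interpolating $J_1$-convergence with the uniform $\bar V_p$ bound yields $V_{p'}$-convergence of $x^n\to\tilde x$ for any $p'>p$ close to $p$ (still with $1/p'+1/q>1$), so by the continuity of $f$, condition (G)(a) and Lemma \ref{lem3.2}, the estimate (\ref{eq3.2}) produces $y^n\to\tilde y$ in $J_1$, where $\tilde y=x_0+\int_0^{\cdot}f(s,\tilde x_{s-})\,da_s+\int_0^{\cdot}g(s,\tilde x_{s-})\,dz_s$. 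Invoking (\ref{eq2.2}) for $(y^n,l^n,u^n)\to(\tilde y,l,u)$ identifies $(\tilde x,\tilde k)=ESP(\tilde y,l,u)$, so $(\tilde x,\tilde k)$ solves (\ref{eq3.1}); uniqueness in Theorem \ref{thm2} forces $(\tilde x,\tilde k)=(x,k)$. Since every subsequence has the same limit, the full sequence converges, and (\ref{eq3.16}) follows from a further application of (\ref{eq2.2}).

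The main obstacle is the passage to the limit in the Young integral against $z$, since only $J_1$-convergence $x^n\to x$ is available rather than $p$-variation convergence. This is overcome by the interpolation step --- uniform $\bar V_p$ bounds upgrade $J_1$-convergence to $V_{p'}$-convergence for any $p'>p$ --- which is exactly the strength needed to feed into the continuity estimate (\ref{eq3.2}). The $a$-integral is easier since $a$ has bounded variation, so continuity of $f$ combined with dominated convergence suffices.
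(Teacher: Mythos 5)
Your first two steps (recasting the scheme as an extended Skorokhod problem driven by a step function $y^n$ built from the discretized integrators, and the uniform bound $\sup_n\bar V_p(x^n)_T<\infty$) coincide with the paper's. The gap is in your third step. A uniform bound on $\bar V_p$ does \emph{not} give relative compactness in the $J_1$ topology: the functions $\1_{[a_n,b_n)}$ with $a_n<b_n$, $b_n-a_n\to0$, have $p$-variation $2^{1/p}$ uniformly in $n$ but admit no $J_1$-convergent subsequence. Your supporting claim that the jumps of $x^n$ are ``controlled by those of $a,z,l,u$'' is also not correct: the jump of $x^n$ at the grid point $(k+1)/n$ is produced by the \emph{increments} of $a$ and $z$ over $[k/n,(k+1)/n]$, not by their jumps, so near a point where $z$ oscillates continuously the schemes can carry non-negligible jumps at shifting grid points, and the modulus $w'$ required for $J_1$-tightness is not under control. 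Moreover, even granting a subsequential $J_1$ limit $\tilde x$, your interpolation step needs $\sup_{t\leq T}|x^n_t-\tilde x_t|\to0$ (the inequality goes through ${\rm Osc}(x^n-\tilde x)_T$), and $J_1$-convergence does not imply uniform convergence when $\tilde x$ has jumps; so the passage to the limit in the Young integral against $z$ is not actually secured by your argument.

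The paper avoids compactness altogether. Since $(x,k)$ already exists by Theorem \ref{thm2}, it introduces the discretization $x^{(n)}$ of $x$ and the auxiliary pair $(\bar x^n,\bar k^n)=ESP(\bar y^n,l^n,u^n)$, where $\bar y^n$ is built from $x^{(n)}$; the $J_1$-continuity of integral functionals (the extension of \cite[Proposition 2.9]{jmp} to bounded $p$-variation) together with the stability (\ref{eq2.2}) of the extended Skorokhod map gives $\bar x^n\to x$ in $J_1$, hence $\bar x^n-x^{(n)}\to0$ in $J_1$ and therefore \emph{uniformly}, because the limit is the zero function. This is precisely what legitimizes the interpolation $\bar V_{p+\epsilon}(\bar x^n-x^{(n)})_T\to0$. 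The proof is then closed by a Gronwall-type contraction estimate in the $\bar V_{p+\epsilon}$ norm comparing the Euler iterates $x^n$ with $\bar x^n$ over finitely many subintervals $[t_{i-1},t_i]$, with a separate treatment of the endpoints (rational versus irrational $t_i$). If you want to salvage your outline, replace the compactness--identification step by this direct comparison with the discretized exact solution.
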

\begin{proof}
Fix $T\in\Rp$ and set $b_T=V_1(a)_T+V_p(z)_T+\bar V_p(h)_T
+\sup_{t\leq T}\max(|\Delta l_t|,|\Delta u_t|)$.
First we show that if $(x,k)$ satisfies (\ref{eq3.1}) then
\begin{equation}\label{eq3.10}
\bar V_p(x)_T\leq D\quad \mbox{\rm where } D\,\, \mbox{\rm depends
only on } d, x_0, L, {C}^{\beta,T}, \beta\,\,\mbox{\rm
and}\,b_T.\end{equation} By Corollary \ref{cor2}, for any $t\leq
T$,
\begin{align*}
\bar V_{p}(x)_t&\leq (d+1)\bar V_p(y)_t+d\bar V_p(h)_t\\
& \leq (d+1)\left[|x_0|+V_p(\int_0^\cdot
f(s,x_{s-})\,da_s)_t+V_p(\int_0^\cdot
g(s,x_{s-})\,dz_s)_t\right]+d\bar V_p(h)_t.
\end{align*}
We have $V_p(\int_0^\cdot f(s,x_{s-})\,da_s)_t\leq
V_1(a)_t\,\sup_{s\leq t}|f(s,x_{s-})| \leq LV_1(a)_t\,(1+\bar
V_p(x)_t)$ and, by (\ref{eq3.2}) and (\ref{eq32}),
\begin{align*}
V_p(\int_0^\cdot g(s,x_{s-})\,dz_s)_t&\leq
C_{p,p\vee (1/\beta)}\bar V_{p\vee (1/\beta)}(g(\cdot,x))_tV_p(z)_t\\
&\leq C_{p,p\vee (1/\beta)}{C}^{\beta,T} (1+\bar V_{p}(x)_t)V_p(z)_t.
\end{align*}
Hence there is $C_0>0$ depending only on $d$, $x_0,\bar V_p(h)_T$
and $C_1>0$  depending on $d,L,\beta,{C}^{\beta,T}$  such that
\[
\bar V_{p}(x)_t \leq C_0+C_1(1+\bar V_p(x)_t)( V_1(a)_t+V_p(z)_t).
\]
Set $t_1=\inf\{t;C_1(V_1(a)_t+V_p(z)_t)>\frac12\}\wedge T$.
By the above,
\[
\bar V_{p}(x)_{[0,t_1)} \leq C_0+\frac{1}{2}
+\frac12 \bar V_p(x)_{[0,t_1)},
\]
which implies that $\bar V_{p}(x)_{[0,t_1)} \leq 2(C_0+1/2)$.
Since by (\ref{eq31}),
\begin{align*}
|\Delta x_{t_1}|&\leq |f(t_1,x_{t_1-})\Delta a_{t_1}|
+|g(t_1,x_{t_1-})\Delta z_{t_1}|+\max(|\Delta l_{t_1}|,|\Delta u_{t_1}|)\\
&\leq L_T(1+|x_{t_1-}|)|\Delta a_{t_1}|+\tilde{C}_{\beta,T}
(1+|x_{t_1-}|)|\Delta z_{t_1}|+\max(|\Delta l_{t_1}|,|\Delta u_{t_1}|),
\end{align*}
it is clear that $\bar V_{p}(x)_{[0,t_1]}\leq D$ where $D$ depends
only on $d, x_0, L, {C}^{\beta,T}, \beta$ and $b_T$. If we set
$t_k=\inf\{t>t_{k-1};C_1(V_1(a)_{[t_{k-1},t]}+V_p(z)_{[t_{k-1},t]})>\frac12\}\wedge
T$, $k=2,3,\dots$, then for by the same arguments $
V_{p}(x)_{[t_{k-1},t_k]}\leq D$ where $D$ is depending only on $d,
x_{t_{k-1}}, L, {C}^{\beta,T}, \beta$ and $b_T$. Since
$m=\inf\{k;t_k=T\}$ is bounded (similarly to (\ref{eq3.6}) one can
check that $m\leq 4^p(V_1(a)_{T}+v_p(z)_{T})$), this  completes
the proof of (\ref{eq3.10}).

Now set $a^{n}_t=a_{k/n}$, $z^{n}_t=z_{k/n}$, $h^n_t=h_{k/n}$,
$\rho^n_t=k/n$, $t\in[k/n,(k+1)/n)$, $k\in\N\cup\{0\}$. It is an
elementary check that
\[
\left\{\begin{array}{l}(x^n,k^n)=ESP(y^n,l,u),\\
\mbox{\rm where}\,\,y^n=x_0+\int_0^t f(\rho^n_{s-},x^n_{s-})\,da^n_s
+\int_0^t g(\rho^n_{s-},x^n_{s-})\,dz^n_s,\quad t\in\Rp.
\end{array}
\right.
\]
Clearly, for any $n\in\N$,
\begin{equation}\label{eq3.110}
V_1(a^n)_T\leq V_1(a)_T,\, V_p(z^n)_T\leq V_p(z)_T\,
\mbox{\rm and}\,V_p(h^n)_T\leq V_p(h)_T.
\end{equation}
Combining (\ref{eq3.10}) with (\ref{eq3.110}) we get
\begin{equation}
\label{eq3.11}
\sup_n\bar V_p(x^n)_T<\infty,\quad T\in\Rp.
\end{equation}
Let $x^{(n)}_t=x_{k/n}$, $k^{(n)}_t=k_{k/n}$, $t\in[k/n,(k+1)/n)$,
$k\in\N\cup\{0\}$, denote the discretization of the solution
$(x,k)$. By using \cite[Chapter 3, Proposition 6.5]{ek} and
\cite[Chapter VI, Proposition 2.2]{js} one can check that
$(x^{(n)},a^n,z^n,l^n,u^n,\rho^n)\lra(x,a,z,l,u,I)$ in $\Dddddtt$,
where $I_s=s$, $s\in\Rp$. From this and an easy extension of
\cite[Proposition 2.9]{jmp} to functions with bounded
$p$-variation it follows that
\begin{align*}
\Big(\bar y^n=x_0+&\int_0^\cdot f(\rho^n_{s-},x^{(n)}_{s-})\,da^n_s
+\int_0^\cdot g(\rho^n_{s-},x^{(n)}_{s-})\,dz^n_s,l^n,u^n\Big)\\
&\lra\Big(y =x_0+\int_0^\cdot f(s,x_{s-})\,da_s+\int_0^\cdot
g(s,x_{s-})\,dz_s,l,u\Big)\quad \mbox{\rm in }\Dddd.
\end{align*}
By the above and (\ref{eq2.2}),
\begin{equation}\label{eq3.12}
(\bar x^n, \bar k^n, \bar y^n, l^n,u^n)\lra (x,k,y,l,u)
\quad \mbox{\rm in }\Dddddd,
\end{equation}
where $(\bar x^n,\bar k^n)=ESP(\bar y^n,l^n,u^n)$, $n\in\N$.
Moreover, analysis similar to that in the proof of (\ref{eq3.10})
shows that
\begin{equation}\label{eq3.13}
\sup_n\bar V_p(\bar x^n)_T<\infty,\quad T\in\Rp.
\end{equation}
By (\ref{eq3.12})  and \cite[Chapter VI, Proposition 2.2]{js},
$(\bar x^n,x^{(n)})\lra (x,x)$ in $\Ddd$, which implies that
\[\sup_{t\leq T}|\bar x^n_t-x^{(n)}_t|\lra0,\quad T\in\Rp.
\]
Combining the above convergence with (\ref{eq3.13}) and  the fact
that $\bar V_p(x^{(n)})_T\leq \bar V_p(x)_T<\infty$, for every
$\epsilon>0$ we obtain
\begin{equation}
\label{eq3.14} \bar V_{p+\epsilon}(\bar x^n-x^{(n)})_T\leq{\rm
Osc}(\bar x^n-x^{(n)})_T^{1-p/(p+\epsilon)}\bar V_p(\bar
x^n-x^{(n)})_T^{p/(p+\epsilon)}\lra0,\quad T\in\Rp,
\end{equation}
where Osc$(x)_T=\sup_{s,t\leq T}|x_t-x_s|$. Fix $\epsilon>0$. By
Corollary \ref{cor1}, for  any  $n\in\N$  and $t\leq T$,
\begin{align*}
&\bar V_{p+\epsilon}(x^n-\bar x^{n})_t\\&\quad
\leq (d+1)\bar V_{p+\epsilon}(\int_0^{\cdot}
f(\rho^n_{s-},x^{n}_{s-})-f(\rho^n_{s-},x^{(n)}_{s-})\,da^n_s
+\int_0^{\cdot}g(\rho^n_{s-},x^{n}_{s-})-g(\rho^n_{s-},x^{(n)}_{s-})\,dz^n_s )_t\\
&\quad\leq (d+1)\bar V_{p+\epsilon}(\int_0^{\cdot}
f(\rho^n_{s-},\bar x^{n}_{s-})-f(\rho^n_{s-},x^{(n)}_{s-})\,da^n_s
+\int_0^{\cdot}g(\rho^n_{s-},\bar x^{n}_{s-})
-g(\rho^n_{s-},x^{(n)}_{s-})\,dz^n_s )_t\\
&\quad\quad+ (d+1)\bar V_{p+\epsilon}(\int_0^{\cdot}
f(\rho^n_{s-},x^{n}_{s-})-f(\rho^n_{s-},\bar x^{n}_{s-})\,da^n_s
+\int_0^{\cdot}g(\rho^n_{s-},x^{n}_{s-})
-g(\rho^n_{s-},\bar x^{n}_{s-})\,dz^n_s )_t\\
&\quad=I^{n,1}_t+I^{n,2}_t.
\end{align*}
From (\ref{eq3.13}), the estimates from the proof of Theorem
\ref{thm2} and the fact that $\bar V_{p+\epsilon}(v)_T\leq\bar
V_p(v)_T$ one can deduce that there is $D>0$   such that for any
$n\in\N$,
\[
I^{n,1}_T\leq D(V_1(a^n)_T +V_p(z^n)_T)\bar V_{p+\epsilon}(\bar
x^{n}-x^{(n)})_T.
\]
This together with (\ref{eq3.110}) and (\ref{eq3.14}) shows that
$\lim_{n\to\infty}I^{n,1}_T=0$. The same arguments  and
(\ref{eq3.11}) show that there is $D>0$ such that for every $t\leq
T$,
\[
I^{n,2}_t\leq  D(V_1(a^n)_t+V_p(z^n)_t)\bar V_{p+\epsilon}(x^{n}-\bar x^{n})_t.
\]
If we set $t_1=\inf\{t; D(V_1(a)_t+V_p(z)_t)>1/2\}\wedge T$ then
by (\ref{eq3.110}),
\[
\bar V_{p+\epsilon}(x^n-\bar x^{n})_{t_1-}\leq I^{n,1}_T
+\frac12\bar V_{p+\epsilon}(x^{n}-\bar x^{n})_{t_1-}\,,
\]
which implies that $\bar V_{p+\epsilon}(x^n-\bar
x^{n})_{t_1-}\to0$. This and (\ref{eq3.14}) imply that $\bar
V_{p+\epsilon}(x^n- x^{(n)})_{t_1-}\to0$. Note that
\[
x^n_{t_1}=\max(\min(x^n_{t_1-}+f(t_1,x^n_{t_1-})\Delta a^n_{t_1}+g(t_1,x^n_{t_1-})
\Delta z^n_{t_1},u^n_{t_1}), l^n_{t_1}),\quad n\in\N.
\]
If $t_1$ is a nonrational number then  $\Delta a^n_{t_1}=\Delta
z^n_{t_1}=\Delta l^n_{t_1}=\Delta u^n_{t_1}=\Delta
x^{(n)}_{t_1}=0$. Hence $x^n_{t_1}=x^n_{t_1-}$  and
$x^{(n)}_{t_1}=x^{(n)}_{t_1-}$\, which implies that
\begin{equation}\label{eq3.15}
\bar V_{p+\epsilon}(x^n- x^{(n)})_{t_1}\lra0.
\end{equation}
In case  $t_1$ rational, set $I=\{n; \mbox{\rm there is
}k\,\,\mbox{\rm   such that  }t_1=k/n\}$ and observe that if $n\in
I$ then $\Delta a^n_{t_1}=\Delta a_{t_1}$, $\Delta
z^n_{t_1}=\Delta z_{t_1}$, $\Delta l^n_{t_1}=\Delta l_{t_1}$,
$\Delta u^n_{t_1}=\Delta u_{t_1}$  and $\Delta
x^{(n)}_{t_1}=\Delta x_{t_1}$. Consequently,
$\lim_{n\to\infty,\,n\in I}x^n_{t_1}=x_{t_1}$  and
$\lim_{n\to\infty,\,n\in I}x^{(n)}_{t_1}=x_{t_1}$. On the other
hand, if $n\notin I$ then  $x^n_{t_1}=x^n_{t_1-}$  and
$x^{(n)}_{t_1}=x^{(n)}_{t_1-}$\,, which completes the proof of
(\ref{eq3.15})  in case of  rational $t_1$.    By  the same
arguments in finitely many steps we show that
\[
\bar V_{p+\epsilon}(x^n- x^{(n)})_{T}\lra0.
\]
Therefore $\sup_{t\leq T}|x^n_t-x^{(n)}_t|\to0$, from which we
deduce  that $\sup_{t\leq T}|k^n_t-k^{(n)}_t|\to0$, which together
with (\ref{eq3.12}) completes the proof of (\ref{eq3.16}).
\end{proof}

\begin{corollary}\label{cor4}
Under the assumptions of Theorem \ref{thm3}, for any $T\in\Rp$,
\begin{equation}
\label{eq3.9}
\max_{k/n\leq T}|x^n_{k/n}-x_{k/n}|\lra0\quad
and\quad\max_{k/n\leq T}|k^n_{k/n}-k_{k/n}|\lra0,
\end{equation}
where $(x,k)$ is a unique solution of (\ref{eq3.1}).
\end{corollary}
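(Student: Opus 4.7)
The plan is to extract Corollary \ref{cor4} directly from the sup-norm convergences already produced inside the proof of Theorem \ref{thm3}, together with one elementary observation about step functions. Both the approximation $x^n$ (defined by (\ref{eq4.1})) and the discretization $x^{(n)}_t = x_{k/n}$ on $[k/n,(k+1)/n)$ used in that proof are constant on the grid intervals and agree with their defining values at the grid points. Consequently, the maximum over grid points $\{k/n : k/n \leq T\}$ and the supremum over $[0,T]$ are comparable for the difference $x^n - x^{(n)}$, which gives
\[
\max_{k/n \leq T}|x^n_{k/n} - x_{k/n}| \;=\; \max_{k/n \leq T}|x^n_{k/n} - x^{(n)}_{k/n}| \;\leq\; \sup_{t \leq T}|x^n_t - x^{(n)}_t|.
\]

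The convergence of the right-hand side to zero was in fact the penultimate conclusion obtained in the proof of Theorem \ref{thm3}: it was derived from $\bar V_{p+\epsilon}(x^n - x^{(n)})_T \to 0$ via the trivial bound $\sup_{t \leq T}|v_t| \leq \bar V_{p+\epsilon}(v)_T$. Hence the first statement in (\ref{eq3.9}) is immediate.

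For the regulator term the same bookkeeping applies: by (\ref{eq4.1}) the process $k^n$ is a step function constant on each interval $[k/n,(k+1)/n)$, and $k^{(n)}_t = k_{k/n}$ on those intervals is also a step function, so
\[
\max_{k/n \leq T}|k^n_{k/n} - k_{k/n}| \;\leq\; \sup_{t \leq T}|k^n_t - k^{(n)}_t|,
\]
and the right-hand side was shown to vanish in the closing line of the proof of Theorem \ref{thm3} (once $\sup_{t\leq T}|x^n_t - x^{(n)}_t| \to 0$ is known, the corresponding statement for the regulators follows from it together with the convergence of the driving Riemann-Stieltjes integrals appearing in $y^n$ and $\bar y^n$). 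There is no genuine obstacle here: the corollary is a pure repackaging of the sup-norm conclusions of Theorem \ref{thm3} as maxima on the Euler grid, exploiting the fact that the approximants and the discretized solution are piecewise constant with matching discontinuity points.
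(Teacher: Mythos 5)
Your proposal is correct and rests on the same facts as the paper's own argument: both reduce (\ref{eq3.9}) to the sup-norm convergences $\sup_{t\leq T}|x^n_t-x^{(n)}_t|\to0$ and $\sup_{t\leq T}|k^n_t-k^{(n)}_t|\to0$ together with the observation that $x^n$ and $x^{(n)}$ are constant on the grid intervals. The only cosmetic difference is that you quote these sup-norm limits directly from the interior of the proof of Theorem \ref{thm3}, whereas the paper rederives them from the statement (\ref{eq3.16}) via \cite[Chapter VI, Proposition 2.2]{js}; the content is the same.
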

\begin{proof}
By (\ref{eq3.16})  and \cite[Chapter VI, Proposition 2.2]{js},
\[(
x^n,x^{(n)},k^n,k^{(n)})\lra (x,x,k,k)\quad\mbox{\rm
in}\,\,\Ddddd.
\]
This implies that $x^n-x^{(n)}\to0$ in $\Dd$ and $k^n-k^{(n)}\to0$
in $\Dd$,  which is equivalent to (\ref{eq3.9}).
\end{proof}

\begin{theorem}\label{thm4}
Assume \mbox{\rm ({F}), ({G})} and that there exists $h\in\Dd$
such that $l\leq h\leq u$ and $V_p(h)_T<\infty$ for $T\in\Rp$. For
$\epsilon>0$ let $l_0\leq x^\epsilon_0\leq u_0$  and let
$f_\epsilon,g_\epsilon$ be functions satisfying \mbox{\rm ({F}),
({G})} with constants $L,\beta,C^\beta$ not depending on
$\epsilon$. If $(x^\epsilon,k^\epsilon)$ denotes a solution of
(\ref{eq3.1}) with $x_0,f,g$ replaced by
$x^\epsilon_0,f_\epsilon,g_\epsilon$ and $x^\epsilon_0\to x_0$,
$\displaystyle{f_\epsilon\arrowk f}$,
$\displaystyle{g_\epsilon\arrowk g}$ then  for every $T\in\Rp$,
\[
\bar V_p(x^\epsilon-x)_T\rightarrow 0\quad and
\quad \bar V_p(k^\epsilon-k)_T\rightarrow 0,
\]
where  $(x,k)$ is a unique solution of  (\ref{eq3.1}).
\end{theorem}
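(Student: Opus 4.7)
The plan is to mimic the arguments of Theorems \ref{thm2} and \ref{thm3}: first obtain a uniform $p$-variation bound on $\{x^\epsilon\}$, then split $x^\epsilon-x$ via Corollary \ref{cor1} into a coefficient-change part (which vanishes thanks to $x_0^\epsilon\to x_0$, $f_\epsilon\arrowk f$, $g_\epsilon\arrowk g$) and a solution-change part (Lipschitz in $x^\epsilon-x$), and finally close via the usual short-interval Gronwall argument. The uniform bound $\sup_\epsilon\bar V_p(x^\epsilon)_T=:N<\infty$ follows verbatim from estimate (\ref{eq3.10}) applied to $(x^\epsilon,k^\epsilon)$: the constants $L,\beta,C^\beta$ are uniform in $\epsilon$ by hypothesis, $b_T$ does not depend on $\epsilon$, and $|x_0^\epsilon|,|g_\epsilon(0,0)|$ stay bounded by their respective convergences.

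Corollary \ref{cor1} then reduces matters to estimating $\bar V_p(y^\epsilon-y)_t$, where $y^\epsilon,y$ denote the ESP inputs. I would decompose
\[
y^\epsilon-y=(x_0^\epsilon-x_0)+\int_0^\cdot(f_\epsilon-f)(s,x^\epsilon_{s-})\,da_s+\int_0^\cdot(g_\epsilon-g)(s,x^\epsilon_{s-})\,dz_s+R^\epsilon,
\]
where $R^\epsilon=\int_0^\cdot(f(s,x^\epsilon_{s-})-f(s,x_{s-}))\,da_s+\int_0^\cdot(g(s,x^\epsilon_{s-})-g(s,x_{s-}))\,dz_s$ is the solution-change part. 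The latter is bounded by $D(V_1(a)_t+V_p(z)_t)\bar V_p(x^\epsilon-x)_t$ exactly as in (\ref{eq3.7}), using (F)(b), Lemma \ref{lem3.2} with the uniform $N$, and (\ref{eq3.2}); the constant $D$ depends on $T,N,d,L_N,C^\beta,C_N,\alpha_N,\beta$ but not on $\epsilon$.

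The main obstacle is showing the two coefficient-change integrals vanish in $p$-variation (not merely in sup norm). For the $f$-integral one uses $V_p\leq V_1$ and
\[
V_1\Bigl(\int_0^\cdot(f_\epsilon-f)(s,x^\epsilon_{s-})\,da_s\Bigr)_T\leq V_1(a)_T\sup_{s\leq T,\,|y|\leq N}|(f_\epsilon-f)(s,y)|,
\]
which vanishes by $f_\epsilon\arrowk f$ and $|x^\epsilon_s|\leq N$. For the $g$-integral I would reuse the interpolation trick of (\ref{eq3.14}). With $q=p\vee(1/\beta)$, (G)(a) applied to both $g_\epsilon$ and $g$ gives
\[
\bar V_q\bigl((g_\epsilon-g)(\cdot,x^\epsilon)\bigr)_T\leq |g_\epsilon(0,x_0^\epsilon)-g(0,x_0^\epsilon)|+2C^\beta\bigl(T^\beta+V_p(x^\epsilon)_T\bigr),
\]
uniform in $\epsilon$, while $\sup_{s\leq T}|(g_\epsilon-g)(s,x^\epsilon_s)|\to 0$ by $g_\epsilon\arrowk g$. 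Since $\beta>1-1/p$ and $p<2$ force $q<p/(p-1)$, one can fix $q'\in(q,p/(p-1))$ so that (\ref{eq3.2}) still applies, and the elementary interpolation $v_{q'}(w)\leq(2\sup|w|)^{q'-q}v_q(w)$ forces $\bar V_{q'}((g_\epsilon-g)(\cdot,x^\epsilon))_T\to 0$, whence the $g$-coefficient change integral tends to zero in $p$-variation too.

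Combining these pieces yields $\bar V_p(x^\epsilon-x)_t\leq A^\epsilon+D'(V_1(a)_t+V_p(z)_t)\bar V_p(x^\epsilon-x)_t$ with $A^\epsilon\to 0$ and $D'=(d+1)D$. Setting $t_1=\inf\{t:D'(V_1(a)_t+V_p(z)_t)\geq 1/2\}\wedge T$ gives $\bar V_p(x^\epsilon-x)_{t_1-}\leq 2A^\epsilon\to 0$, and the jump at $t_1$ is handled via Remark \ref{rem1}(b): $x^\epsilon_{t_1}$ is the projection of $x^\epsilon_{t_1-}+f_\epsilon(t_1,x^\epsilon_{t_1-})\Delta a_{t_1}+g_\epsilon(t_1,x^\epsilon_{t_1-})\Delta z_{t_1}$ onto $[l_{t_1},u_{t_1}]$, and $f_\epsilon\arrowk f$, $g_\epsilon\arrowk g$ together with $x^\epsilon_{t_1-}\to x_{t_1-}$ yield $x^\epsilon_{t_1}\to x_{t_1}$. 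Iterating over the finitely many intervals $[t_{k-1},t_k]$ (finiteness as in (\ref{eq3.6})) gives $\bar V_p(x^\epsilon-x)_T\to 0$, and $\bar V_p(k^\epsilon-k)_T\to 0$ follows from $k^\epsilon-k=(x^\epsilon-x)-(y^\epsilon-y)$.
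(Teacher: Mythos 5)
Your proposal is correct and follows essentially the same route as the paper's proof: the same uniform bound $\sup_\epsilon \bar V_p(x^\epsilon)_T<\infty$ from (\ref{eq3.10}), the same four-term decomposition via Corollary \ref{cor1} into coefficient-change and solution-change integrals, the same sup-norm/$p$-variation interpolation for the $g$-term (your choice of $q'\in(q,p/(p-1))$ is exactly the paper's $1/\gamma$ with $\gamma\in(1-1/p,\beta\wedge(1/p))$), and the same short-interval iteration with the jump at $t_1$ handled through Remark \ref{rem1}(b).
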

\begin{proof}
First observe that by (\ref{eq3.10}),
\begin{equation}\label{eq4.7}
\sup_{\epsilon>0}\bar V_p(x^\epsilon)_T<\infty,\quad
T\in\Rp.
\end{equation}
Fix $T\in\Rp$. By Corollary \ref{cor1}, for every $t\in[0,T]$,
\begin{align*}
\bar V_p(x^\epsilon-x)_t&\leq (d+1)\Big(|x^\epsilon_0-x_0|+
V_p(\int_0^{\cdot}
f_\epsilon(s,x^{\epsilon}_{s-})-f(s,x_{s-})\,da_s)_t\\
&\quad+V_p(\int_0^{\cdot}g_\epsilon(s,x^\epsilon_{s-})-g(s,x_{s-})\,dz_s )_t\Big)\\
&\leq (d+1)\Big(|x^\epsilon_0-x_0|+ V_p(\int_0^{\cdot}
(f_\epsilon-f)(s,x^\epsilon_{s-})\,da_s)_t
+V_p(\int_0^{\cdot}(g_\epsilon-g)(s,x^\epsilon_{s-})\,dz_s )_t\\
&\quad+ V_p(\int_0^{\cdot}
f(s,x^{\epsilon}_{s-})-f(s,x_{s-})\,da_s)_t
+V_p(\int_0^{\cdot}g(s,x^\epsilon_{s-})-g(s,x_{s-})\,dz_s )_t\Big)\\
&=(d+1)\Big(|x^\epsilon_0-x_0|+I^{\epsilon,1}_t+I^{\epsilon,2}_t
+I^{\epsilon,3}_t+I^{\epsilon,4}_t\Big).
\end{align*}
Set $N=\sup_\epsilon\bar V_p(x^\epsilon)_T$, which is finite by to
(\ref{eq4.7})  Clearly, $\sup_{t\leq T}|x^\epsilon_t|\leq N$ and
\[
I^{\epsilon,1}_T= V_p(\int_0^{\cdot}
(f_\epsilon-f)(s,x^\epsilon_{s-})\,da_s)_{T}
\leq \sup_{\substack{v\in B(0,N) \\
t\in[0,T]}}|f_\epsilon(t,v)-f(t,v)|V_1(a)_T \xrightarrow[\epsilon\to0]{}0.
\]
Since $p<2$,  there exists  $\gamma\in(1-1/p,\beta\wedge(1/p))$.
Therefore  by  (\ref{eq3.2}), (\ref{eq32}) and (\ref{eq4.7})
there is $C>0$ depending only on
$\gamma,C_{p,1/\gamma},L,\beta,C^\beta,C^{\beta,T}$ and $V_p(z)_T$
such that
\begin{align*}
I^{\epsilon,2}_T&=V_p(\int_0^{\cdot}(g_\epsilon-g)(s,x^\epsilon_{s-})\,dz_s)_{T}
\leq
C_{p,1/\gamma}\bar V_{1/\gamma}((g_\epsilon-g)(\cdot,x^\epsilon))_{T}V_p(z)_T\\
&\leq C_{p,1/\gamma}({\rm Osc}
((g_\epsilon-g)(\cdot,x^\epsilon))_{T}^{(1/\gamma-p\vee(1/\beta))\gamma}
V_{p\vee(1/\beta)}((g_\epsilon-g)(\cdot,x^\epsilon))_T^{(\beta\wedge(1/p))\gamma}\\
&\quad+|(g_\epsilon-g)(0,x^\epsilon_0)|)V_p(z)_T\\
&\leq C\sup_{\substack{v\in B(0,N) \\ t\in[0,T]}}
(|g_\epsilon(t,v)-g(t,v)|^{1-(p\vee(1/\beta))\gamma}+|g_\epsilon(0,v)-g(0,v)|).
\end{align*}
Consequently,
\begin{equation}
\label{eq4.8}
I^{\epsilon,1}_T+I^{\epsilon,2}_T\xrightarrow[\epsilon\to0]{}0.
\end{equation}
Using once again  (\ref{eq4.7}) and estimates from the proof of
Theorem \ref{thm2} we check  that there is $D>0$ such that for
every $t\leq T$,
\[
(d+1)(I^{\epsilon,3}_t+I^{\epsilon,4}_t)
\leq  D(V_1(a)_t+V_p(z)_t)\bar V_{p}(x^{\epsilon}-x)_t.
\]
Similarly to the proof of  Theorem \ref{thm2} we set $t_1=\inf\{t;
D(V_1(a)_t+V_p(z)_t)>1/2\}\wedge T$. Observe that
\[
\bar V_{p}(x^\epsilon- x)_{t_1-}\leq(d+1)(|x^\epsilon_0-x_0|
+I^{\epsilon,1}_T +I^{\epsilon,2}_T)+\frac12\bar
V_{p}(x^{\epsilon}- x)_{t_1-}\,.
\]
From this and (\ref{eq4.8}) we deduce that $\bar
V_{p}(x^\epsilon-x)_{t_1-}\to0$. Using  arguments from the proof
of Theorem \ref{thm2} we show that this implies that $ \bar
V_{p}(x^\epsilon-x)_{t_1}\xrightarrow[\epsilon\to0]{}0$. Applying
this argument to (finitely many) intervals $[t_i,t_{i+1}]$ we
prove the theorem.
\end{proof}

\nsubsection{Applications to stochastic processes}

In this section we apply  our  deterministic results to SDEs with
reflecting boundary condition. Let $(\Omega, {\cal F}, ({\cal
F}_t), P)$ be a filtered probability space and let $A$ be an
$({\cal F}_t)$ adapted process with trajectories in $\D$,
$Z,L,U,H$ be $({\cal F}_t)$ adapted processes with trajectories in
$\Dd$ such that  $L\leq H\leq U$ and $P(V_1(A)_T<\infty)=1$,
$P(V_p(Z)_T<\infty)=1$, $P(V_p(H)_T<\infty)=1$ for every
$T\in\Rp$. Note that $Z$ need not be a semimartingale. However, it
is a  Dirichlet process and a $p$-semimartigale in the sense
considered in \cite{cms} and \cite{k1,k2}.

\begin{definition}\label{def1}
{\rm Let $L\leq U$ and $X_0$ be an ${\cal F}_0$ measurable  random
vector such that  $L_0\leq X_0\leq U_0$. We  say that a pair
$(X,K)$  of  $({\cal F}_t)$ adapted processes with trajectories in
$\Dd$  such that $P(V_p(X)_T<\infty)=1$ for $T\in\Rp$ is a strong
solution of (\ref{eq1.1})  if $(X,K)=ESP(Y,L,U)$, where
\[
Y_t=X_0+\int_0^tf(s,X_{s-})\,dA_s+\int_0^tg(s,X_{s-})\,dZ_s,\quad t\in\Rp.
\]}
\end{definition}

\begin{theorem} \label{thm5}
Assume \mbox{\rm({F}), ({G})}. Let $L,\,U,\,H$ be $({\cal F}_t)$
adapted processes with trajectories in $\Dd$ such that  $L\leq
H\leq U$  and  $P(V_p(H)_T<\infty)=1$. If $X_0$ is an ${\cal F}_0$
measurable random vector such that  $L_0\leq X_0\leq U_0$ then
(\ref{eq1.1}) has a  unique strong solution $(X,K)$. Moreover, if
we define $\{(X^n,K^n)\}$  to be a sequence of Picard's iterations
for (\ref{eq1.1}), i.e. $(X^0,K^0)=ESP(X_0,L,U)$ and for each
$n\in\N$, $Y^n=X_0+\int_0^\cdot f(s,X^{n-1}_s)\,dA_s+\int_0^\cdot
g(s,X^{n-1}_s)\,dZ_s$ and $(X^n,K^n)=ESP(Y^n,L,U)$ then for every
$T\in\Rp$,
\[
\bar V_p(X^n-X)_T\rightarrow0,\,\,\mbox{
P-a.s.}\quad and\quad \bar V_p(K^n-K)_T\rightarrow0,\,\,
\mbox{P-a.s.}
\]
\end{theorem}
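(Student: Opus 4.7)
The strategy is to reduce the theorem to a pathwise application of the deterministic existence/uniqueness/Picard--convergence Theorem \ref{thm2}, after first verifying that the Picard iterates are $({\cal F}_t)$-adapted. Fix $T\in\Rp$. Since $A,Z,L,H,U$ are $({\cal F}_t)$-adapted with $P(V_1(A)_T<\infty)=1$, $P(V_p(Z)_T<\infty)=1$, $P(V_p(H)_T<\infty)=1$ and $L\leq H\leq U$, there is a $P$-null set $N_T$ outside of which the trajectories $A(\omega),Z(\omega),L(\omega),H(\omega),U(\omega)$ together with $X_0(\omega)\in[L_0(\omega),U_0(\omega)]$ satisfy exactly the hypotheses of Theorem \ref{thm2}. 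For every such $\omega$, Theorem \ref{thm2} produces a unique pair $(X(\omega),K(\omega))\in\Ddd$ solving (\ref{eq3.1}) pathwise, and shows that the deterministic Picard iterates starting from $X_0(\omega)$ converge to $(X(\omega),K(\omega))$ in the $p$-variation norm on $[0,T]$. Extending arbitrarily on $\bigcup_T N_T$ yields a candidate strong solution.

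The main obstacle is adaptedness, since Theorem \ref{thm2} is purely deterministic. I would prove by induction that each Picard iterate $(X^n,K^n)$ is $({\cal F}_t)$-adapted. For $n=0$ the explicit projection formula in Remark \ref{rem1}(b) shows that $X^0_t$ and $K^0_t$ are Borel functions of the values of $X_0$, $L$ and $U$ up to time $t$, hence ${\cal F}_t$-measurable. Assuming $X^{n-1}$ is adapted with paths of locally bounded $p$-variation, the integrand $s\mapsto f(s,X^{n-1}_{s-})$ is adapted and left-continuous, while $s\mapsto g(s,X^{n-1}_{s-})$ is adapted with locally bounded $q$-variation for $q=p\vee(1/\beta)$ by assumption (G). Using the continuity estimate (\ref{eq3.2}), the pathwise Riemann--Stieltjes integrals with respect to $A$ and $Z$ can be realised as almost sure limits of adapted Riemann--Stieltjes sums along refining (e.g.\ dyadic) partitions, and hence are themselves $({\cal F}_t)$-adapted. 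Consequently $Y^n$ is adapted, and a second application of Remark \ref{rem1}(b) gives adaptedness of $(X^n,K^n)=ESP(Y^n,L,U)$.

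Having established adaptedness, the pathwise conclusions of Theorem \ref{thm2} immediately yield $\bar V_p(X^n-X)_T\to 0$ and $\bar V_p(K^n-K)_T\to 0$ $P$-a.s.\ for every $T\in\Rp$, and $X$ is $({\cal F}_t)$-adapted as an almost sure pointwise limit of adapted processes (working, as usual, with the completed filtration). Pathwise uniqueness of the strong solution reduces $\omega$-wise to the uniqueness part of Theorem \ref{thm2}. The subtlest ingredients are therefore not the convergence arguments themselves, which are delivered by Theorem \ref{thm2}, but the measurability of the pathwise Riemann--Stieltjes integrals against $Z$ and of the ESP map; both are standard consequences of Corollary \ref{cor1} and of the $p$-variation continuity estimate (\ref{eq3.2}).
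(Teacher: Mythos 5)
Your proposal follows essentially the same route as the paper: apply the deterministic Theorem \ref{thm2} pathwise to obtain existence, uniqueness and Picard convergence for a.e.\ $\omega$, then deduce adaptedness of $(X,K)$ from adaptedness of the iterates $(X^n,K^n)$. The only difference is that you spell out the inductive argument for adaptedness of the iterates (via Riemann--Stieltjes sums and the projection formula of Remark \ref{rem1}(b)), which the paper simply asserts; this is a correct and welcome elaboration, not a divergence.
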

\begin{proof}
From Theorem \ref{thm2} we deduce that for every $\omega\in\Omega$
there exists a unique solution
$(X(\omega),K(\omega))=ESP(Y(\omega),L(\omega),U(\omega))$ and for
every $T\in\Rp$,
\[
\bar V_p (X^n(\omega)-X(\omega))_T\rightarrow0,\,\,\mbox{
P-a.s.},\quad\bar V_p (K^n(\omega)-K(\omega))_T\rightarrow0,\,\,
\mbox{P-a.s.}
\]
Since for each $n\in\N$ the pair $(X^n,K^n)$  is $({\cal F}_t)$
adapted, the pair of limit processes  $(X,K)$  is $({\cal F}_t)$
adapted as well, which completes the proof.
\end{proof}
\medskip

Let $\BH$ be a fractional Brownian  motion (fBm) with Hurst index
$H>1/2$
and let $\sigma:\RRp\to\RR$ is a measurable function  such that
\[
\|\sigma\|_{\LH{[0,T]}}:=(\int_0^T|\sigma_s|^{1/H}ds)^H<\infty,\quad
T\in\Rp.
\]
One can observe that the process   $\BB=\int_0^\cdot\sigma_s\,d\BH_s$ is
 a  centered Gaussian process with continuous
trajectories.
Moreover, by \cite[Theorem 1.1]{mmv}, for every $r>0$,
\begin{align*}
E|\ZH_{t_2}-\ZH_{t_1}|^r \leq C(r,H)
\Big(\int_{t_1}^{t_2}|\sigma_{s}|^{1/H} ds\Big)^{rH}
\end{align*}
for all $0\le t_1\le t_2$. Hence for any subdivision
$\pi=\{0=t_0<\ldots<t_n=T\}$ of $[0,T]$ we have
\[
\sum_{i=1}^n(E|\BB_{t_i}-\BB_{t_{i-1}}|)^{1/H}\leq
(C(1,H))^{1/H}
\sum_{i=1}^n\Big(\int_{t_{i-1}}^{t_1}|\sigma_{s}|^{1/H}ds\Big)=
(C(1,H))^{1/H}\|\sigma\|^{1/H}_{\LH{[0,T]}}.
\]
Therefore  from \cite[Theorem 3.2]{jm} it follows that if $p>1/H$
then $P(V_p(\BB)_T<\infty)=1$ for $T\in\Rp$ (note also that $Z^H$
is a Dirichlet process from the class ${\cal D}^{1/H}$ studied in
\cite{cs}). To  approximate  $Z^H$ one can use the methods
developed in \cite{sz2}.

We now show how to apply Theorem \ref{thm5} and our approximation results of Section 4 to
fractional SDEs with reflecting boundary condition. Let
$B^H=(B^{H,1},\dots,B^{H,d})$, where  $B^{H,1},\dots,B^{H,d}$ are
independent  fractional Brownian motions, and let
$Z^H=(Z^{H,1},\dots,Z^{H,d})$, where
$Z^{H,i}=\int_0^\cdot\sigma^i_s\,d B^{H,i}_s$ with
$\sigma^i:\RRp\to\RR$ such that $\|\sigma^i\|_{\LH{[0,T]}}<\infty$
for $T>0$, $i=1,\dots,d$. Let $a:\RRp\to\RR$ be a continuous
function with locally bounded variation. We consider fractional
SDEs of the form
\begin{equation}\label{eq1.6}
 X_t = X_0 + \int_0^t f(s,X_{s})\,da_s+\int_0^t
g(s,X_{s})\,dZ^{H}_s+K_t, \quad t\in\mathbb R^+.
\end{equation}
Clearly, (\ref{eq1.6})  generalizes classical fractional SDEs
driven by $B^H$ and is a particular case of (\ref{eq1.5}).

For any $n\in\N$ we set
\begin{equation}\label{eq5.9}
X^n_t=X^n_{k/n},\quad K^n_t=K^n_{k/n},\quad
t\in[k/n,(k+1)/n),\quad k\in\N\cup\{0\},
\end{equation}
where $X^n_0=X_0$, $K^n_0=0$ and
\begin{equation}\!\!\left\{\begin{array}{ll}
\Delta Y^n_{(k+1)/n}&=f(k/n,X^n_{k/n})(a_{(k+1)/n}-a_{k/n})
+g(k/n,X^n_{k/n})(\ZH_{(k+1)/n}-\ZH_{k/n}), \smallskip\\
X^n_{(k+1)/n}&=\max\big(\min(X^n_{k/n}
+\Delta Y^n_{(k+1)/n},U_{(k+1)/n}),L_{(k+1)/n}\big),\label{eq5.10}
\smallskip\\
K^n_{(k+1)/n}&=K^n_{k/n}+(X^n_{(k+1)/n}-X^n_{k/n})-\Delta
Y^n_{(k+1)/n}.
\end{array}
\right.
\end{equation}

\begin{corollary} \label{cor10}
Assume \mbox{\rm({F}), ({G})}. Let $L,\,U,\,H$ be $({\cal F}_t)$
adapted processes with continuous trajectories such that  $L\leq
H\leq U$  and   $P(V_p(H)_T<\infty)=1$ for $T\in\Rp$. If $X_0$ is
an ${\cal F}_0$ measurable random vector such that $L_0\leq
X_0\leq U_0$   then (\ref{eq1.6}) has a  unique strong solution
$(X,K)$. Moreover, if $\{(X^n,K^n)\}$ is a sequence of
approximation defined by (\ref{eq5.9}) and (\ref{eq5.10}) then for
every $T\in\Rp$,
\begin{equation} \label{eq5.11}
\sup_{t\leq T}|X^n_t-X_t|\rightarrow0, \,\, P\mbox{-a.s.},\quad
\sup_{t\leq T}|K^n_t-K_t|\rightarrow0,\,\, P\mbox{-a.s.}
\end{equation}
\end{corollary}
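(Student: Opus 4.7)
The strategy is to treat Corollary \ref{cor10} as a pathwise consequence of Theorem \ref{thm5} and Theorem \ref{thm3}. First I fix $p\in(1/H,2)$ compatible with the H\"older exponent $\beta$ from ({G})(a) in the sense that $1-1/p<\beta$; this is possible because $H>1/2$ allows $p$ arbitrarily close to $1/H$. By the computations immediately preceding the statement of the corollary, one has $P(V_p(Z^H)_T<\infty)=1$ for every $T\in\Rp$. Since $a$ is continuous with locally bounded variation, $L\leq H\leq U$ are continuous, and $V_p(H)_T<\infty$ P-a.s., all pathwise hypotheses of Theorem \ref{thm5} are satisfied for P-a.e.\ $\omega$. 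Applying Theorem \ref{thm5} then provides the unique strong solution $(X,K)$ of (\ref{eq1.6}).

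Next I establish that $(X,K)$ has continuous trajectories P-a.s. Because both $a$ and $Z^H$ are continuous, the only possible jumps of the Riemann--Stieltjes integrals $\int_0^\cdot f(s,X_{s-})\,da_s$ and $\int_0^\cdot g(s,X_{s-})\,dZ^H_s$ at a time $t$ would equal $f(t,X_{t-})\Delta a_t$ and $g(t,X_{t-})\Delta Z^H_t$, both of which vanish. Hence the process $Y$ appearing in Definition \ref{def1} has continuous paths, and the explicit projection formulas in Remark \ref{rem1}(b) applied to $(X,K)=ESP(Y,L,U)$ with continuous barriers $L,U$ force $X$ and $K$ to be continuous as well.

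For the approximation part, I apply Theorem \ref{thm3} pathwise. For P-a.e.\ $\omega$ the scheme (\ref{eq5.10}) coincides exactly with (\ref{eq4.1}) for the data $a,Z^H(\omega),L(\omega),U(\omega),H(\omega)$, so Theorem \ref{thm3} yields the $J_1$-convergence
\[
(X^n,K^n,L^n,U^n)\lra(X,K,L,U)\quad\mbox{in}\ \Ddddd,\quad P\mbox{-a.s.},
\]
where $L^n,U^n$ are the piecewise-constant interpolants from Section 4. Since the limits $L,U,X,K$ all have continuous trajectories, coordinatewise $J_1$-convergence to a continuous limit automatically upgrades to uniform convergence on compact subsets of $\Rp$, and (\ref{eq5.11}) follows.

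The only technical point I anticipate as an obstacle is the hypothesis ``$f$ is continuous'' needed to invoke Theorem \ref{thm3}: under ({F}) alone $f$ is merely locally Lipschitz in $x$ uniformly in $t$, whereas Theorem \ref{thm3} also requires joint continuity in $(t,x)$. I would either treat this as implicit in the standing assumptions of the corollary or add it explicitly; the rest of the argument is a routine pathwise transcription of the deterministic results already proved.
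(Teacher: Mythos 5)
Your proposal is correct and follows essentially the same route as the paper, which likewise proves the corollary by applying Theorem \ref{thm5} for existence/uniqueness and Theorem \ref{thm3} pathwise, then upgrading the $J_1$-convergence to uniform convergence using the continuity of $a$, $Z^H$, $L$, $U$ and hence of $(X,K)$. Your remark about the continuity of $f$ required by Theorem \ref{thm3} is a fair observation about the corollary's stated hypotheses, but it does not affect the argument, which otherwise matches the paper's.
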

\begin{proof}
It suffices to apply Theorem \ref{thm5}  and Theorem \ref{thm3}.
The uniform convergence follows from  the fact that if $a$  is a
continuous function  and $\ZH,L,U$ have continuous trajectories
then  also the solution $(X,K)$ has continuous trajectories.
\end{proof}
\medskip

Note that in the case where $L,U$ may have jumps Theorem
\ref{thm3} implies weaker then (\ref{eq5.11}) convergence. Namely,
we then have
\[
(X^n,K^n)\rightarrow(X,K),\,\,
P\mbox{-a.s.}\,\,\mbox{\rm in}\,\,\Ddd.
\]

\nsubsection{Proof of Theorem \ref{thm1}}

We follow the proof of \cite[Theorem 2.1]{fs}.

{\em Step 1.} We assume additionally that  $y^1,y^2$ and the barriers $l,u$
are step functions of the form
\[
y^j_t=Y^j_{i},\,l_t=L_i,\,u_t=U_i,\quad t\in[t_{i-1},t_{i}),
\quad i=1,\dots,n-1
\]
and $y^j_t=Y^j_{n}$, $l_t=L_{n}$, $u_t=U_{n}$,
$t\in[t_{n-1},t_n=T]$,  $j=1,2$, for some partition
$0=t_0<t_1<\dots<t_n=T$ of the interval $[0,T]$. By Remark
\ref{rem1}(b), $ k^j_t=K^j_{i}$, $t\in[t_{i-1},t_{i})$,
$i=1,\dots,n-1$,  $k^j_t=K^j_{n}$,  $t\in[t_{n-1},t_n=T]$,
$j=1,2$, where $K^1_{0}=K^2_{0}=0$ and $
K^j_{i}=\max(\min(K^j_{i-1},U_i-Y^j_{i}),L_i-Y^j_{i})$,
$i=1,\dots,n$, $j=1,2$. Clearly
\begin{equation}\label{eq2.3}
L_i-Y^j_i\leq K^j_i\leq U_i-Y^j_i,\quad i=1,\dots,n,\, j=1,2.
\end{equation}

Without loss of generality we may and will assume that
$v_p(k^1_s-k^2_s)_T> 0$. Hence there exists $i$ such that
$K^1_{i}\neq K^1_{i-1}$ or $K^2_{i}\neq K^2_{i-1}$. Later on,
without loss of generality we  will assume that for any
$i=1,\dots,n-1$,
\begin{equation}\label{eq2.4}
K^1_{i}\neq K^1_{i-1}\quad\mathrm{or}\quad
K^2_{i}\neq K^2_{i-1}
\end{equation}
(If (\ref{eq2.4}) does not hold then  we set $v_0=0$,
\[
v_k=\inf\{i>v_{k-1};K^1_{i}\neq K^1_{i-1}
\,\mathrm{or}\,K^2_{i}\neq K^2_{i-1}\}\wedge n,\quad
k=1,\dots,n,
\]
$\tilde n=\inf\{k;v_k=n\}$,  $\tilde y^j_t=Y^j_{v_k}$, $\tilde
l_t=L_{v_k}$, $\tilde u_t=U_{v_k}$, $t\in[t_{v_{k-}},t_{v_{k}})$
for $k=1,\dots,\tilde n-1$, $\tilde y^j_t=Y^j_{\tilde n}$, $\tilde
l_t=L_{\tilde n}$,  $\tilde u_t=U_{\tilde n}$, for
$t\in[t_{v_{\tilde n-1}},t_{v_{\tilde n}}=T]$, $j=1,2$. Then
(\ref{eq2.4}) holds true for the functions $\tilde y^j$, $(\tilde
x^j,\tilde k^j)=ESP(\tilde y^j,\tilde l,\tilde u)$, $j=1,2$, and
moreover, $\bar V_p(k^1-k^2)_T =\bar V_p(\tilde k^1-\tilde
k^2)_T$ and $\bar V_p(\tilde y^1-\tilde y^2)_T\leq \bar
V_p(y^1-y^2)_T$. Consequently,
\[\bar V_p(\tilde k^1-\tilde
k^2)_T\leq \bar V_p(\tilde y^1-\tilde y^2)_T\,\,\Longrightarrow \bar V_p(k^1-k^2)_T\leq\bar V_p(y^1-y^2)_T.)\]
It is clear that there exist numbers
$0=i_0<i_1<\ldots<i_m=n$ such that
\begin{equation}\label{eq2.5}
v_p(k^1-k^2)_T
=\s_{k=1}^{m}{|(K^1_{i_k}-K^1_{i_{k-1}})
-(K^2_{i_k}-K^2_{i_{k-1}})|^p}
\end{equation}
and
\begin{equation}\label{eq2.6}
(K^1_{i_k}-K^1_{i_{k-1}})-(K^2_{i_k}-K^2_{i_{k-1}})\neq 0,\quad k=1,\dots,m.
\end{equation}
Hence,  if $m\geq 2$ then for  $k=2,\dots,m$
we have
\begin{equation}\label{eq2.7}
\big((K^1_{i_{k-1}}-K^1_{i_{k-2}})-(K^2_{i_{k-1}}-K^2_{i_{k-2}})\big)
\big((K^1_{i_k}-K^1_{i_{k-1}})-(K^2_{i_k}-K^2_{i_{k-1}})\big)<0.
\end{equation}
Indeed, if (\ref{eq2.7}) is not satisfied then by (\ref{eq2.6}),
\begin{align*}
&|(K^1_{i_{k-1}}-K^1_{i_{k-2}})-(K^2_{i_{k-1}}-K^2_{i_{k-2}})|^p
+|(K^1_{i_k}-K^1_{i_{k-1}})-(K^2_{i_k}-K^2_{i_{k-1}})|^p\\
&\quad< |(K^1_{i_{k-1}}-K^1_{i_{k-2}})-(K^2_{i_{k-1}}-K^2_{i_{k-2}})
+ (K^1_{i_k}-K^1_{i_{k-1}})-(K^2_{i_k}-K^2_{i_{k-1}})|^p\\
&\quad= |(K^1_{i_k}-K^1_{i_{k-2}})-(K^2_{i_k}-K^2_{i_{k-2}})|^p,
\end{align*}
which contradicts (\ref{eq2.5}).

We will show that there exists
$0=i_0\leq r_1^\wedge\leq i_1$ (resp. $0=i_0\leq r_1^\vee\leq
i_1$)  such that if $K^1_{i_1}-K^2_{i_1}\leq 0$ (resp.
$K^1_{i_1}-K^2_{i_1}\geq 0$) then
\begin{equation}\label{eq2.8}
Y^2_{r^\wedge_1}-Y^1_{r^\wedge_1}\leq K^1_{i_1}-K^2_{i_1}\,\,
(\mbox{\rm resp.}\,K^1_{i_1}-K^2_{i_1}\leq Y^2_{r^\vee_1}-Y^1_{r^\vee_1})
\end{equation}
and for  $k=2,\dots,m$  there exist $i_{k-1}\leq
r_k^\wedge,r_k^\vee\leq i_k$ such that
\begin{equation}\label{eq2.9}
Y^2_{r^\wedge_k}-Y^1_{r^\wedge_k}\leq K^1_{i_k}-K^2_{i_k}
\leq Y^2_{r^\vee_k}-Y^1_{r^\vee_k}.
\end{equation}
Fix $k=1,\dots,m$. Set $r^j_k=\max\{i\leq i_k:
K^j_i=U_i-Y^j_i\,\mbox{\rm or} \,K^j_i=L_i-Y^j_i\}$, $j=1,2$, with
the convention that $\max\emptyset=0$.  By (\ref{eq2.4}),
$r^1_k=i_k$ or $r^2_k=i_k$. Without loss of generality we may and
will assume that $r^1_k=i_k$. Then we  have three cases:
\begin{enumerate}
\item[(a)] $K^1_{i_k}-K^2_{i_k}=(L_{i_k}-Y^1_{i_k})-(L_{r^2_{k}}-Y^2_{r^2_k})$
(or $K^1_{i_k}-K^2_{i_k}=(U_{i_k}-Y^1_{i_k})-(U_{r^2_{k}}-Y^2_{r^2_k})$),

\item[(b)] $K^1_{i_k}-K^2_{i_k}=L_{i_k}-Y^1_{i_k}$  and  $r^2_k=0$
(or $K^1_{i_k}-K^2_{i_k}=U_{i_k}-Y^1_{i_k}$  and  $r^2_k=0$),

\item[(c)] $K^1_{i_k}-K^2_{i_k}=(L_{i_k}-Y^1_{i_k})-(U_{r^2_{k}}-Y^2_{r^2_k})$
(or
$K^1_{i_k}-K^2_{i_k}=(U_{i_k}-Y^1_{i_k})-(L_{r^2_{k}}-Y^2_{r^2_k})$).
\end{enumerate}
By (\ref{eq2.3}) in all the cases
\[
K^1_{i_k}-K^2_{i_k}=L_{i_k}-Y^1_{i_k}-K^2_{i_k} \leq
L_{i_k}-Y^1_{i_k}-K^2_{i_k}+K^2_{i_k}-(L_{i_k}-Y^2_{i_k})=Y^2_{i_k}-Y^1_{i_k},
\]
which implies that we can put $r^\vee_k=i_k$. In order to  find
$r^\wedge_k$ we consider the cases (a), (b), (c) separately.

In case (a), if $r^2_k=i_k$  then
\[
K^1_{i_k}-K^2_{i_k}=L_{i_k}-Y^1_{i_k}-(L_{i_k}-Y^2_{i_k})=Y^2_{i_k}-Y^1_{i_k}
\]
and we put $r^\wedge_k=i_k$. If  $r^2_k<i_k$  then we set
$r^\star=\max\{i<i_k:K^1_i=U_i-Y^1_i\}\vee r^2_k$. Observe that
$K^2_{r^\star}=K^2_{r^\star+1}=\dots=K^2_{i_k}$. Since  for
$r^\star<v\leq i_k$, $K^1_v=\max(K_{v-1},L_v-Y^1_v)$, it follows
by (\ref{eq2.4}) that
\begin{equation}\label{eq2.10}
K^1_{r^\star}-K^2_{r^\star}<K^1_{r^\star+1}
-K^2_{r^\star+1}<\dots<K^1_{i_k}-K^2_{i_k}.
\end{equation}
From this it also follows that
$Y^2_{r^\star}-Y^1_{r^\star}<K^1_{i_k}-K^2_{i_k}$. Indeed, if
$r^\star>r^2_k$ (resp. $r^\star=r^2_k$ ) then
$K^1_{r^\star}=U_{r^\star}-Y^1_{r^\star}$  (resp.
$K^2_{r^\star}=L_{r^\star}-Y^2_{r^\star}$ )  and by (\ref{eq2.3}),
\[
K^1_{i_k}-K^2_{i_k}>K^1_{r^\star}-K^2_{r^\star}
\geq U_{r^\star}-Y^1_{r^\star}-U_{r^\star}+Y^2_{r^\star}
=Y^2_{r^\star}-Y^1_{r^\star},
\]
(resp.
$\displaystyle{K^1_{i_k}-K^2_{i_k}>K^1_{r^\star}-K^2_{r^\star}\geq
L_{r^\star}-Y^1_{r^\star}-L_{r^\star}+Y^2_{r^\star}=Y^2_{r^\star}-Y^1_{r^\star}}$).
What is left is  to put  $r^\wedge_k=r^\star$ and  show that
$i_{k-1}\leq r^\star$. This is obvious if $k=1$, so assume that
$k\geq 2$ and $i_{k-1}>r^\star$. By (\ref{eq2.10}),
$(K^1_{i_k}-K^1_{i_{k-1}})-(K^2_{i_k}-K^2_{i_{k-1}})>0$. From this
(\ref{eq2.7}) it follows that
$(K^1_{i_{k-1}}-K^1_{i_{k-2}})-(K^2_{i_{k-1}}-K^2_{i_{k-2}})<0$,
which together with (\ref{eq2.10}) implies that $i_{k-2}<r^\star$.
Using once again (\ref{eq2.10}) we see that
$(K^1_{r^\star}-K^1_{i_{k-1}})-(K^2_{r^\star}-K^2_{i_{k-1}})<0$.
Consequently,
\begin{align*}
0&>(K^1_{i_{k-1}}-K^1_{i_{k-2}})-(K^2_{i_{k-1}}-K^2_{i_{k-2}})\\
&>(K^1_{i_{k-1}}-K^1_{i_{k-2}})-(K^2_{i_{k-1}}-K^2_{i_{k-2}})
+(K^1_{r^\star}-K^1_{i_{k-1}})-(K^2_{r^\star}-K^2_{i_{k-1}})\\
&=(K^1_{r^\star}-K^1_{i_{k-2}})-(K^2_{r^\star}-K^2_{i_{k-2}})
\end{align*}
and
\begin{equation}\label{eq2.11}
|(K^1_{i_{k-1}}-K^1_{i_{k-2}})-(K^2_{i_{k-1}}-K^2_{i_{k-2}})|^p
<|(K^1_{r^\star}-K^1_{i_{k-2}})-(K^2_{r^\star}-K^2_{i_{k-2}})|^p.
\end{equation}
Similarly,
\begin{align*}
0&<(K^1_{i_k}-K^1_{i_{k-1}})-(K^2_{i_k}-K^2_{i_{k-1}})\\
&<(K^1_{i_k}-K^1_{i_{k-1}})-(K^2_{i_k}-K^2_{i_{k-1}})
-(K^1_{r^\star}-K^1_{i_{k-1}})+(K^2_{r^\star}-K^2_{i_{k-1}})\\
&=(K^1_{i_k}-K^1_{r^\star})-(K^2_{i_k}-K^2_{r^\star}),
\end{align*}
which  implies that
\begin{equation}\label{eq2.12}
|(K^1_{i_k}-K^1_{i_{k-1}})-(K^2_{i_k}-K^2_{i_{k-1}})|^p
<|(K^1_{i_k}-K^1_{r^\star})-(K^2_{i_k}-K^2_{r^\star})|^p.
\end{equation}
Combining (\ref{eq2.11}) with (\ref{eq2.12}) we obtain
\begin{align*}
&|(K^1_{i_{k-1}}-K^1_{i_{k-2}})-(K^2_{i_{k-1}}-K^2_{i_{k-2}})|^p
+|(K^1_{i_k}-K^1_{i_{k-1}})-(K^2_{i_k}-K^2_{i_{k-1}})|^p\\
&\quad<|(K^1_{r^\star}-K^1_{i_{k-2}})-(K^2_{r^\star}-K^2_{i_{k-2}})|^p
+|(K^1_{i_k}-K^1_{r^\star})-(K^2_{i_k}-K^2_{r^\star})|^p,
\end{align*}
which contradicts  (\ref{eq2.5}) and completes the proof of the
fact that $i_{k-1}\leq r^\star$. Consequently, in case (a) we put
$r^\wedge_k=r^\star$.

In case (b) (resp. (c)) we set  $r^\star=\max\{i< i_k:
K^1_i=U_i-Y^1_i\}$ (resp.  $r^\star=\max\{i< i_k:
K^2_i=L_i-Y^2_i\,\mathrm{or}\, K^1_i=U_i-Y^1_i\}$). For
$r^\star<v\leq i_k$ we have $K^1_v=\max(K^1_{v-1},L_v-Y^1_v)$ and
$K^2_v=\min(K^2_{v-1},U_v-Y^2_v)$. As in case (a) we conclude from
this and (\ref{eq2.4}) that that \[ K^1_{r^\star}-K^2_{r^\star}<
K^1_{r^\star+1}- K^2_{r^\star+1}<\dots< K^1_{i_{k}}-K^2_{i_{k}}.\]
By the argument used in case (a) we also show that $i_{k-1}\leq
r^\star$. Moreover, if $r^\star>0$ and
$K^2_{r^\star}=L_{r^\star}-Y^2_{r^\star}$  (resp.
$K^1_{r^\star}=U_{r^\star}-Y^1_{r^\star}$) then by (\ref{eq2.3}),
\[
K^1_{i_k}-K^2_{i_k}> K^1_{r^\star}-(L_{r^\star}-Y^2_{r^\star})
\geq(L_{r^\star}-Y^1_{r^\star})
-(L_{r^\star}-Y^2_{r^\star})= Y^2_{r^\star}-Y^1_{r*}
\]
(resp. $\displaystyle{K^1_{i_k}-K^2_{i_k}>
(U_{r^\star}-Y^1_{r^\star})-K^2_{r^\star}
\geq(U_{r^\star}-Y^1_{r^\star})-(U_{r^\star}-Y^2_{r^\star})=
Y^2_{r^\star}-Y^1_{r^\star}}$). Therefore we put
$r^\wedge_k=r^\star$. Since $k=1$ if $r^\star=0$, the proof of
(\ref{eq2.8}) and  (\ref{eq2.9}) is complete.

Now observe that by (\ref{eq2.9}), if
$K^1_{i_k}-K^2_{i_k}>K^1_{i_{k-1}}-K^2_{i_{k-1}}$ for some
$k=2,\dots,m$ then
\[
0<(K^1_{i_k}-K^2_{i_k})-(K^1_{i_{k-1}}-K^2_{i_{k-1}})
\leq(Y^2_{r^\vee_k}-Y^1_{r^\vee_k})
-(Y^2_{r^\wedge_{k-1}}-Y^1_{r^\wedge_{k-1}}),
\]
which implies that
\begin{equation}\label{eq2.13}
|(K^1_{i_k}-K^2_{i_k})-(K^1_{i_{k-1}}-K^2_{i_{k-1}})|^p
\leq|(Y^1_{r^\vee_k}-Y^2_{r^\vee_k})
-(Y^1_{r^\wedge_{k-1}}-Y^2_{r^\wedge_{k-1}})|^p.
\end{equation}
Similarly, if
 $K^1_{i_k}-K^2_{i_k}<K^1_{i_{k-1}}-K^2_{i_{k-1}}$ then
\begin{equation}\label{eq2.14}
|(K^1_{i_k}-K^2_{i_k})-(K^1_{i_{k-1}}-K^2_{i_{k-1}})|^p\leq
|(Y^1_{r^\wedge_k}-Y^2_{r^\wedge_{k}})-
(Y^1_{r^\vee_{k-1}}-Y^2_{r^\vee_{k-1}})|^p.
\end{equation}
In case $k=1$, if $K^1_{i_1}-K^2_{i_1}>0$ then
\begin{equation}\label{eq2.15}
|(K^1_{i_1}-K^2_{i_1})-(K^1_{i_0}-K^2_{i_0})|^p
=|K^1_{i_1}-K^2_{i_1}|^p\leq|Y^1_{r^\vee_1}- Y^2_{r^\vee_1}|^p
\end{equation}
and if $K^1_{i_1}-K^2_{i_1}<0$ then
\begin{equation}\label{eq2.16}
|(K^1_{i_1}-K^2_{i_1})-(K^1_{i_0}-K^2_{i_0})|^p
\leq|Y^1_{r^\wedge_1}- Y^2_{r^\wedge_1}|^p.
\end{equation}
Putting together (\ref{eq2.13})--(\ref{eq2.16}) we conclude
that
\[
\s_{k=1}^{m}|(K^1_{i_k}-K^2_{i_k})-(K^1_{i_{k-1}}-K^2_{i_{k-1}})|^p
\leq |Y^1_{\tilde{r}_1}- Y^2_{\tilde{r}_1}|^p
+\s_{k=2}^{m}|(Y^1_{\tilde{r}_k}-Y^2_{\tilde{r}_{k}})-
(Y^1_{\tilde{r}_{k-1}}-Y^2_{\tilde{r}_{k-1}})|^p,
\]
where $\tilde{r}_k=r^\wedge_k$ or $\tilde{r}_k=r^\vee_k$  and
$i_{k-1}\leq \tilde{r}_k\leq i_k$, $k=1,\dots,m$. Hence
\begin{align*}
\bar V_p(k^1-k^2)_T&=V_p(k^1-k^2)_T\\
&=\Big(\s_{k=1}^{m}|(K^1_{i_k}-K^2_{i_k})
-(K^1_{i_{k-1}}-K^2_{i_{k-1}})|^p\Big)^{1/p}\\
&\leq \Big(|Y^1_{\tilde{r}_1}- Y^2_{\tilde{r}_1}|^p
+\s_{k=2}^{m}|(Y^1_{\tilde{r}_k}-Y^2_{\tilde{r}_{k}})
-(Y^1_{\tilde{r}_{k-1}}-Y^2_{\tilde{r}_{k-1}})|^p\Big)^{1/p}\\
&\leq |y^1_0- y^2_0|
+\Big(\s_{k=1}^{m}|(y^1_{t_{\tilde{r}_k}}-y^2_{t_{\tilde{r}_{k}}})-
(y^1_{t_{\tilde{r}_{k-1}}}-y^2_{t_{\tilde{r}_{k-1}}})|^p\Big)^{1/p}
\end{align*}
for some partition
$0=t_{\tilde{r}_0}<t_{\tilde{r}_1}<\dots<t_{\tilde{r}_m}\leq T$,
which proves the theorem in the case of step functions $y^1,y^2$
and step barriers $l,u$.

{\em Step 2.} The general case.

Let $\{y^{1,n}\}$, $\{y^{2,n}\}$,
$\{l^n\}$ and $\{u^n\}$ be sequences of discretizations of $y^1$,
$y^2$, $l$ and $u$, respectively, i.e. $y^{1,n}_t=y^1_{k/n}$,
$y^{2,n}_t=y^{2}_{k/n}$,$l^{n}_t=l_{k/n}$, $u^{n}_t=u_{k/n}$
$t\in[k/n,(k+1)/n)$, $k\in\No$. By \cite[Chapter VI, Proposition
2.2]{js},  $(y^{1,n},y^{2,n},l^n,u^n)\lra(y^1,y^2,l,u)$ in
$\DDDD$. Let $(x^{j,n},k^{j,n})=ESP(y^{j,n},l^n,u^n)$, $n\in\N$,
$j=1,2$. By (\ref{eq2.2}),
$(k^{1,n},k^{2,n},y^{1,n},y^{2,n})\lra(k^1,k^2,y^1,y^2)$ in
$\DDDD$, which implies that
\begin{equation}\label{eq2.17}
k^{1,n}-k^{2n}\lra k^1-k^2\quad\mbox{\rm in}\,\,\D.
\end{equation}
By Step 1, for $n\in\N$ and $T\in\Rp$ we have $\bar
V_p(k^{1,n}-k^{2,n})_T\leq \bar V_p (y^{1,n}-y^{2,n})_T$. Clearly,
$\bar V_p(y^{1,n}-y^{2,n})_T\leq \bar V_p(y^1-y^2)_T$, $n\in\N$,
$T\in\Rp$. From this and (\ref{eq2.17}) it follows that for every
$T\in\Rp$ such that $\Delta k^1_T=\Delta k^2_T=\Delta y^1_T=\Delta
y^2_T=0$,
\begin{align*}
\bar V_p(k^1-k^2)_T \leq\liminf_{n\to\infty} \bar
V_p(k^{1,n}-k^{2,n})_T \leq\sup_n \bar V_p(y^{1,n}-y^{2,n})_T\leq
\bar V_p(y^1-y^2)_T.
\end{align*}
To obtain the desired result for arbitrary $T\in\Rp$ we use right
continuity of $\bar V_p(k^{1}-k^{2})$ and $\bar V_p(y^{1}-y^{2})$.

\mbox{}\\
{\bf Acknowledgements}\\[1mm]
{Research supported by Polish NCN grant no.  2012/07/B/ST1/03508).}

\end{document}